\documentclass[11pt,a4paper]{article}

\usepackage[utf8]{inputenc}
\usepackage[T1]{fontenc}
\usepackage{amsmath,amssymb,amsthm}
\usepackage{mathtools}
\usepackage{enumerate}
\usepackage[colorlinks=true,linkcolor=blue,citecolor=blue,urlcolor=blue]{hyperref}
\usepackage[margin=2.5cm]{geometry}
\usepackage{booktabs}
\usepackage{natbib}

\newtheorem{theorem}{Theorem}[section]
\newtheorem{lemma}[theorem]{Lemma}
\newtheorem{proposition}[theorem]{Proposition}
\newtheorem{corollary}[theorem]{Corollary}
\newtheorem{definition}[theorem]{Definition}
\newtheorem{example}[theorem]{Example}
\newtheorem{remark}[theorem]{Remark}
\newtheorem{assumption}[theorem]{Assumption}

\DeclareMathOperator{\Var}{Var}

\DeclareMathOperator{\E}{\mathbb{E}}
\DeclareMathOperator{\Prob}{\mathbb{P}}
\DeclareMathOperator{\supp}{supp}

\newcommand{\R}{\mathbb{R}}

\newcommand{\Z}{\mathbb{Z}}

\begin{document}

\title{\textbf{TwinKernel Estimation for Point Process Intensity Functions:\\
Adaptive Nonparametric Methods via Orbital Regularity}}

\author{Jocelyn Nemb\'e\\[0.2cm]
\small L.I.A.G.E, Institut National des Sciences de Gestion, Libreville, Gabon\\
\small and\\
\small Modeling and Calculus Lab, ROOTS-INSIGHTS, Libreville, Gabon\\[0.1cm]
\small \texttt{jnembe@hotmail.com}}

\date{December 2024}

\maketitle

\begin{abstract}
We develop TwinKernel methods for nonparametric estimation of intensity functions of point processes. Building on the general TwinKernel framework and combining it with martingale techniques for counting processes, we construct estimators that adapt to orbital regularity of the intensity function. Given a point process $N$ with intensity $\lambda$ and a cyclic group $G = \langle\varphi\rangle$ acting on the time/space domain, we transport kernels along group orbits to create a hierarchy of smoothed Nelson-Aalen type estimators. Our main results establish: (i) uniform consistency via martingale concentration inequalities; (ii) optimal convergence rates for intensities in twin-H\"older classes, with rates depending on the effective dimension $d_{\mathrm{eff}}$; (iii) adaptation to unknown smoothness through penalized model selection; (iv) automatic boundary bias correction via local polynomial extensions in twin coordinates; (v) minimax lower bounds showing rate optimality. We apply the methodology to hazard rate estimation under random censoring, where periodicity or other orbital structure in the hazard may arise from circadian rhythms, seasonal effects, or treatment schedules. Martingale central limit theorems yield asymptotic confidence bands. Simulation studies demonstrate 3--7$\times$ improvements over classical kernel hazard estimators when the intensity exhibits orbital regularity.

\medskip

\noindent\textbf{Keywords:} Point processes; Intensity estimation; Counting processes; Martingales; Kernel smoothing; Hazard rate; Group actions; Adaptive estimation; Minimax rates.

\medskip

\noindent\textbf{MSC 2020:} Primary 62G05, 62N02, 60G55; Secondary 62G20, 60G44, 62C20.
\end{abstract}

\tableofcontents

\newpage

\section{Introduction}\label{sec:intro}

\subsection{Background and Motivation}

Point processes provide a natural framework for modeling random events occurring in time or space: failure times in reliability, event times in survival analysis, spike trains in neuroscience, earthquake occurrences in seismology, and claims arrivals in insurance. The fundamental object characterizing a point process is its \emph{intensity function} $\lambda(t)$, which describes the instantaneous rate of event occurrence.

Since the foundational work of \citet{Aalen1978}, modern theory of counting processes and martingales has provided powerful tools for intensity estimation. The key insight is that the \emph{innovation martingale}
\begin{equation}\label{eq:innovation}
M(t) = N(t) - \int_0^t \lambda(s) \, ds
\end{equation}
captures the stochastic fluctuations of the counting process $N$ around its compensator, enabling the use of martingale limit theory for inference.

Kernel smoothing of the Nelson-Aalen estimator, introduced by \citet{RamlauHansen1983}, is a standard approach to intensity estimation. Given observations from a counting process, the kernel estimator takes the form
\begin{equation}\label{eq:classical_kernel}
\hat{\lambda}_h(t) = \int K_h(t - s) \frac{dN(s)}{Y(s)},
\end{equation}
where $K_h(u) = h^{-1}K(u/h)$ is a rescaled kernel, $Y(s)$ is the at-risk process, and the integral is over the observed event times. The theoretical properties of this estimator are well understood: \citet{RamlauHansen1983}, \citet{Andersen1993}, and \citet{FanGijbels1996} established consistency, asymptotic normality, and optimal convergence rates for intensities in classical smoothness classes.

\subsection{Limitations of Classical Methods}

Classical kernel intensity estimation faces several challenges:

\begin{enumerate}[(i)]
\item \textbf{Boundary bias}: Near the boundaries of the observation window, the kernel estimator exhibits substantial bias because the kernel extends beyond the data range.

\item \textbf{Bandwidth selection}: The optimal bandwidth depends on the unknown smoothness of $\lambda$, requiring data-driven selection methods.

\item \textbf{Ignoring structure}: Many intensity functions exhibit special structure---periodicity (circadian rhythms, seasonal effects), scale invariance (self-similar processes), or symmetry---that classical methods do not exploit.
\end{enumerate}

Local polynomial methods, developed by \citet{FanGijbels1996} and applied to intensity estimation by \citet{Nielsen1998} and others, address the boundary bias issue. The present paper addresses all three limitations by combining local polynomial fitting with the TwinKernel framework.

\subsection{Examples of Orbital Structure in Intensities}

\begin{example}[Circadian Hazard Rates]\label{ex:circadian}
In biomedical studies, event rates often vary with time of day due to circadian rhythms. The hazard rate for cardiac events, for instance, peaks in morning hours. If we view time modulo 24 hours, the hazard exhibits periodicity, suggesting that estimation should exploit this structure.
\end{example}

\begin{example}[Seasonal Failure Rates]\label{ex:seasonal}
Equipment failure rates may depend on season (temperature, humidity). The intensity function $\lambda(t)$ may satisfy $\lambda(t + T) \approx \lambda(t)$ for $T = 1$ year, suggesting a periodic or quasi-periodic model.
\end{example}

\begin{example}[Self-Similar Point Processes]\label{ex:selfsimilar}
In seismology and finance, point processes often exhibit self-similarity: the intensity satisfies $\lambda(ct) = c^{-\alpha}\lambda(t)$ for some scaling exponent $\alpha$. The dilation group acts naturally on such processes.
\end{example}

\begin{example}[Spatial Isotropy]\label{ex:isotropy}
For spatial point processes on $\R^d$, isotropy means that $\lambda(x)$ depends only on $\|x\|$. The rotation group $SO(d)$ preserves the intensity, reducing the effective dimension from $d$ to $1$.
\end{example}

\subsection{The TwinKernel Approach}

We apply the TwinKernel framework to point process intensity estimation. The key idea is to replace the single kernel $K$ with a hierarchy $\{K_j\}_{j \geq 0}$ obtained by transporting $K$ along the orbits of a group action on the time/space domain.

For a cyclic group $G = \langle\varphi\rangle$ acting on the domain $E$, we define
\begin{equation}\label{eq:twin_kernel}
K_j(t, s) := \frac{1}{h_j} K\left(\frac{d(\varphi^{-j} \cdot t, \varphi^{-j} \cdot s)}{h_j}\right),
\end{equation}
where $\{h_j\}$ is a decreasing bandwidth sequence. The TwinKernel intensity estimator at level $j$ is
\begin{equation}\label{eq:twin_estimator}
\hat{\alpha}_j(t) := \int K_j(t, s) \frac{J(s)}{Y(s)} \, dN(s).
\end{equation}
A penalized model selection procedure chooses the optimal level $\hat{j}$, yielding an adaptive estimator.

\subsection{Main Contributions}

The contributions of this paper are:

\begin{enumerate}[(i)]
\item \textbf{TwinKernel intensity estimators}: We define and analyze kernel intensity estimators based on the twin-kernel hierarchy, extending classical results to the group-structured setting.

\item \textbf{Local polynomial TwinKernel estimation}: We develop local polynomial versions that automatically correct boundary bias while exploiting orbital structure.

\item \textbf{Martingale-based theory}: Using martingale concentration inequalities and central limit theorems, we establish:
\begin{itemize}
\item Uniform consistency (Theorem~\ref{thm:consistency});
\item Optimal convergence rates for twin-H\"older intensities (Theorems~\ref{thm:L2rate}, \ref{thm:uniform_rate});
\item Oracle inequality and adaptation to unknown smoothness (Theorems~\ref{thm:oracle}, \ref{thm:adaptation});
\item Asymptotic normality and confidence bands (Theorem~\ref{thm:CLT});
\item Minimax lower bounds (Theorem~\ref{thm:minimax}).
\end{itemize}

\item \textbf{Applications}: We apply the methodology to hazard rate estimation under random censoring, Poisson process intensity estimation, and periodic/quasi-periodic intensities.

\item \textbf{Numerical studies}: Simulations demonstrate substantial improvements over classical methods when the intensity has orbital regularity.
\end{enumerate}

\subsection{Related Work}

\paragraph{Point process intensity estimation.} The literature on kernel hazard estimation is extensive. \citet{RamlauHansen1983} introduced kernel smoothing of the Nelson-Aalen estimator. \citet{Yandell1983} and \citet{TannerWong1983} studied asymptotic properties. \citet{MullerWang1994} developed variable bandwidth methods. \citet{Nielsen1998} and \citet{Bagkavos2011} introduced local polynomial hazard estimators.

\paragraph{Counting process theory.} The martingale approach to survival analysis was developed by \citet{Aalen1978}, \citet{Gill1980}, \citet{AndersenGill1982}, and systematized in \citet{Andersen1993} and \citet{FlemingHarrington1991}.

\paragraph{Adaptive estimation.} Model selection for intensity estimation was studied by \citet{Reynaud2003} and \citet{Comte2011}. Our oracle inequality builds on the Barron-Birg\'e-Massart methodology.

\paragraph{Symmetry in statistics.} Group-invariant estimation has a long history; see \citet{Eaton1989}. The specific application to kernel methods via orbital transport is the innovation of the TwinKernel framework.

\subsection{Organization}

Section~\ref{sec:framework} reviews counting process theory and introduces the TwinKernel framework for intensities. Section~\ref{sec:estimator} defines the estimators. Section~\ref{sec:main} presents the main theoretical results with complete proofs. Section~\ref{sec:localpoly} develops local polynomial extensions. Section~\ref{sec:applications} discusses applications. Section~\ref{sec:simulations} presents simulations. Section~\ref{sec:discussion} concludes with discussion and open problems.

\section{Framework}\label{sec:framework}

\subsection{Counting Processes and Intensities}

Let $(\Omega, \mathcal{F}, \Prob)$ be a probability space equipped with a filtration $(\mathcal{F}_t)_{t \geq 0}$ satisfying the usual conditions (right-continuous, complete).

\begin{definition}[Counting Process]\label{def:counting}
A \textbf{counting process} is a stochastic process $N = (N(t))_{t \geq 0}$ that is:
\begin{enumerate}[(i)]
\item adapted to $(\mathcal{F}_t)$;
\item right-continuous with left limits (c\`adl\`ag);
\item piecewise constant with jumps of size $+1$;
\item $N(0) = 0$ almost surely.
\end{enumerate}
\end{definition}

\begin{definition}[Intensity Process]\label{def:intensity}
The \textbf{intensity process} $\lambda = (\lambda(t))_{t \geq 0}$ is a non-negative, $(\mathcal{F}_t)$-predictable process such that
\begin{equation}
A(t) := \int_0^t \lambda(s) \, ds
\end{equation}
is the \textbf{compensator} of $N$: the process $M(t) := N(t) - A(t)$ is a local martingale with respect to $(\mathcal{F}_t)$.
\end{definition}

\begin{definition}[Innovation Martingale]\label{def:innovation}
The process $M(t) = N(t) - A(t)$ is called the \textbf{innovation martingale}.
\end{definition}

\begin{proposition}[Properties of Innovation Martingale]\label{prop:innovation}
The innovation martingale $M$ satisfies:
\begin{enumerate}[(i)]
\item $\E[M(t)|\mathcal{F}_s] = M(s)$ for $s \leq t$;
\item $\E[dN(t)|\mathcal{F}_{t-}] = \lambda(t) \, dt$;
\item The predictable variation is $\langle M \rangle(t) = A(t) = \int_0^t \lambda(s) \, ds$;
\item The optional variation is $[M](t) = N(t)$.
\end{enumerate}
\end{proposition}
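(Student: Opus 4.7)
The plan is to derive all four statements directly from the defining identity $M = N - A$ together with textbook facts about compensators and quadratic variations of counting processes; none of the items requires a deep argument, so the task is essentially one of verification.

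For (i), the local-martingale property is already built into Definition~\ref{def:intensity}, so only the upgrade to the genuine martingale identity $\E[M(t)\mid\mathcal{F}_s] = M(s)$ remains. I would take as a standing integrability hypothesis $\E\int_0^t\lambda(s)\,ds < \infty$, which is implicit everywhere else in the paper. Under this bound $\E N(t) = \E A(t) < \infty$, so one may pick a localizing sequence $\tau_n \uparrow \infty$, apply optional stopping to the martingales $M^{\tau_n}$, and pass to the limit by dominated convergence using the envelope $|M^{\tau_n}(t)| \leq N(t) + A(t) \in L^1$.

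For (ii), the expression $\E[dN(t)\mid\mathcal{F}_{t-}] = \lambda(t)\,dt$ is informal shorthand for the predictable-projection characterization of $A$. I would formalize it through the integral identity $\E\int_0^t H(s)\,dN(s) = \E\int_0^t H(s)\lambda(s)\,ds$, valid for every bounded predictable $H$ because $H\cdot M$ is a zero-mean local martingale; evaluating this on a narrow predictable rectangle recovers the heuristic.

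For (iii) and (iv), I plan to exploit the two structural facts that $A$ is continuous of locally bounded variation and that $N$ has jumps of size exactly $+1$. Continuous finite-variation processes contribute nothing to the square bracket, so $[M] = [N - A] = [N]$, and $[N](t) = \sum_{s\leq t}(\Delta N(s))^2 = \sum_{s\leq t}\Delta N(s) = N(t)$ because $(\Delta N(s))^2 = \Delta N(s) \in \{0,1\}$; this is (iv). The predictable variation $\langle M\rangle$ is by construction the compensator of $[M]$, and since $[M] = N$ has compensator $A$, we obtain $\langle M\rangle = A = \int_0^{\cdot}\lambda(s)\,ds$, which gives (iii). The only genuine obstacle is the promotion step in (i): the distinction between local and true martingales is where an assumption on $\lambda$ is unavoidable, but under the standing integrability hypothesis it reduces to a routine dominated-convergence argument.
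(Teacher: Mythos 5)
Your proof is correct, and on two of the four items it takes a genuinely different (and more careful) route than the paper. For (i), the paper simply writes ``this is the definition of a martingale,'' whereas you correctly observe that Definition~\ref{def:intensity} only delivers a \emph{local} martingale and that the upgrade to $\E[M(t)\mid\mathcal{F}_s]=M(s)$ needs the integrability hypothesis $\E\int_0^t\lambda(s)\,ds<\infty$ together with localization and dominated convergence with the envelope $N(t)+A(t)$; this fills a real gap the paper glosses over. For (iii), the paper quotes the formula $\langle\int H\,dM\rangle(t)=\int_0^t H(s)^2\lambda(s)\,ds$ and sets $H\equiv 1$, which is mildly circular since that formula is usually \emph{derived from} $\langle M\rangle=A$; your route---prove (iv) first, then identify $\langle M\rangle$ as the predictable compensator of $[M]=N$, which is $A$ by the defining property of the intensity---is cleaner and logically self-contained (it does require noting that $[M]=N$ is locally integrable so that the compensator exists, which your standing hypothesis covers). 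Items (ii) and (iv) follow the same lines as the paper: the predictable-projection/integral identity formalizing the differential shorthand, and the observation that the continuous finite-variation part $A$ contributes nothing to the square bracket while $(\Delta N)^2=\Delta N$.
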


\begin{proof}
(i) This is the definition of a martingale.

(ii) By definition of the compensator, $\E[N(t+dt) - N(t)|\mathcal{F}_{t-}] = \E[A(t+dt) - A(t)|\mathcal{F}_{t-}] = \lambda(t) \, dt$.

(iii) For any bounded predictable process $H$, the stochastic integral $\int_0^t H(s) \, dM(s)$ is a martingale. Its quadratic variation is
\begin{equation}
\left\langle \int H \, dM \right\rangle(t) = \int_0^t H(s)^2 \lambda(s) \, ds.
\end{equation}
Taking $H \equiv 1$ gives $\langle M \rangle(t) = A(t)$.

(iv) Since $N$ has jumps of size 1 and $M = N - A$ with $A$ continuous, we have $\Delta M(s) = \Delta N(s) \in \{0, 1\}$. Thus $[M](t) = \sum_{s \leq t} (\Delta M(s))^2 = \sum_{s \leq t} \Delta N(s) = N(t)$.
\end{proof}

\subsection{The Multiplicative Intensity Model}

In many applications, the intensity takes the multiplicative form
\begin{equation}\label{eq:multiplicative}
\lambda(t) = \alpha(t) Y(t),
\end{equation}
where:
\begin{itemize}
\item $\alpha(t)$ is a deterministic \textbf{baseline intensity} (the object of estimation);
\item $Y(t)$ is a predictable \textbf{at-risk process} indicating how many units are under observation at time $t$.
\end{itemize}

\begin{example}[Survival Analysis]\label{ex:survival}
For $n$ independent subjects with survival times $T_1, \ldots, T_n$ and censoring times $C_1, \ldots, C_n$, we observe $X_i = \min(T_i, C_i)$ and $\Delta_i = \mathbf{1}_{T_i \leq C_i}$. The aggregated counting process is $N(t) = \sum_{i=1}^n \mathbf{1}_{X_i \leq t, \Delta_i = 1}$, and the at-risk process is $Y(t) = \sum_{i=1}^n \mathbf{1}_{X_i \geq t}$. The baseline intensity $\alpha(t)$ is the hazard rate.
\end{example}

\begin{example}[Poisson Process]\label{ex:poisson}
For a Poisson process with intensity $\alpha(t)$, we have $Y(t) \equiv 1$ (always at risk), and $\lambda(t) = \alpha(t)$.
\end{example}

\subsection{The Nelson-Aalen Estimator}

The cumulative baseline intensity $A(t) = \int_0^t \alpha(s) \, ds$ is estimated by the Nelson-Aalen estimator:
\begin{equation}\label{eq:nelson_aalen}
\hat{A}(t) := \int_0^t \frac{J(s)}{Y(s)} \, dN(s),
\end{equation}
where $J(s) = \mathbf{1}_{Y(s) > 0}$ ensures we only integrate when at risk.

\begin{proposition}[Properties of Nelson-Aalen]\label{prop:nelson_aalen}
Under mild conditions:
\begin{enumerate}[(i)]
\item $\hat{A}(t) - A(t) = \int_0^t \frac{J(s)}{Y(s)} \, dM(s)$ is a local martingale.
\item $\E[\hat{A}(t)] = \int_0^t \E\left[\frac{J(s)}{Y(s)} \lambda(s)\right] ds \approx A(t)$.
\item $\Var(\hat{A}(t)) \approx \int_0^t \frac{\alpha(s)}{y(s)} \, ds$, where $y(s) = \E[Y(s)]$.
\item $\sqrt{n}(\hat{A}(t) - A(t)) \xrightarrow{d} \mathcal{N}(0, \sigma^2(t))$ under regularity conditions.
\end{enumerate}
\end{proposition}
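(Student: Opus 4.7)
The plan is to treat the four parts in order, exploiting in each case the decomposition $dN = dM + \lambda\,ds$ and the stochastic-integral machinery from Proposition~\ref{prop:innovation}. Throughout, the key bookkeeping device is the predictable process $H(s) := J(s)/Y(s)$, which is well defined thanks to the indicator $J(s) = \mathbf{1}_{Y(s) > 0}$, and predictable because $Y$ is left-continuous (as a sum of indicators of at-risk sets).

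For (i), I would substitute $dN = dM + \lambda\,ds$ and use the multiplicative form $\lambda(s) = \alpha(s) Y(s)$ to write
\begin{equation*}
\hat A(t) = \int_0^t H(s)\,dM(s) + \int_0^t J(s)\alpha(s)\,ds.
\end{equation*}
Setting $A^\ast(t) := \int_0^t J(s)\alpha(s)\,ds$, the desired identity $\hat A(t) - A^\ast(t) = \int_0^t H(s)\,dM(s)$ follows; the fact that this stochastic integral is a local martingale is standard once one knows $H$ is locally bounded predictable. Replacing $A^\ast$ by $A$ introduces the negligible remainder $\int_0^t (1-J(s))\alpha(s)\,ds$, which vanishes on the event $\{\inf_{s\le t} Y(s) > 0\}$. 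For (ii), I would take expectations and invoke the optional stopping/integration-by-parts fact that bounded stochastic integrals against $M$ have mean zero (after a standard localization by the stopping times $\tau_k = \inf\{s: Y(s) \le 1/k\}$), giving $\E[\hat A(t)] = \int_0^t \E[H(s)\lambda(s)]\,ds = \int_0^t \E[J(s)\alpha(s)]\,ds$, which equals $A(t)$ up to the probability $\Prob(Y(s)=0)$.

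For (iii), I would use part (iii) of Proposition~\ref{prop:innovation}, which identifies the predictable quadratic variation of $\int H\,dM$ with $\int H^2 \lambda\,ds$. Substituting $H = J/Y$ and $\lambda = \alpha Y$ collapses this to $\int_0^t (J(s)\alpha(s)/Y(s))\,ds$. Taking expectations and approximating $Y(s) \approx y(s)$ (with $y(s) = \E[Y(s)]$) yields the stated asymptotic variance. A clean justification of the approximation uses the uniform law of large numbers $\sup_{s \le t} |Y(s)/n - \bar y(s)| \xrightarrow{\Prob} 0$ in the $n$-subject regime, combined with dominated convergence on the truncated version $\inf_{s\le t} Y(s)/n \ge \varepsilon > 0$.

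Part (iv) is the main obstacle, since it requires a proper martingale central limit theorem rather than an algebraic identity. The plan is to apply Rebolledo's theorem to the sequence $M_n(t) := \sqrt{n}\int_0^t H_n(s)\,dM_n(s)$, where $H_n = J_n/Y_n$ and $M_n$ is the innovation martingale for the aggregated $n$-subject process. Two conditions must be verified: (a) convergence of the predictable variation $n \int_0^t (J_n(s)/Y_n(s)^2)\lambda_n(s)\,ds \xrightarrow{\Prob} \int_0^t \alpha(s)/\bar y(s)\,ds =: \sigma^2(t)$, which reduces via the uniform LLN for $Y_n/n$ to dominated convergence; and (b) a Lindeberg-type bound on the jumps, $n\int_0^t H_n(s)^2 \mathbf{1}_{|\sqrt{n} H_n(s)| > \varepsilon}\lambda_n(s)\,ds \xrightarrow{\Prob} 0$, which follows because each jump of $M_n$ contributes $\sqrt{n}/Y_n(s) = O(n^{-1/2})$ almost surely, so the indicator is eventually zero. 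Invoking Rebolledo then gives convergence of $\sqrt{n}(\hat A(t) - A(t))$ to a centered Gaussian with variance $\sigma^2(t)$; the residual bias $\sqrt{n}(A^\ast(t) - A(t))$ is controlled by the exponential tail $\Prob(\inf_{s\le t} Y_n(s) = 0) = o(n^{-1/2})$ under the assumption $\inf_{s\le t}\bar y(s) > 0$.
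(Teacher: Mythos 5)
Your proposal is correct and follows essentially the same route as the paper: the substitution $dN = dM + \alpha Y\,ds$ for the martingale identity in (i), zero-mean of the stochastic integral for (ii), the predictable-variation formula plus $Y/n \to y$ for (iii), and Rebolledo's theorem for (iv). You are in fact somewhat more careful than the paper -- distinguishing $A^\ast = \int J\alpha\,ds$ from $A$, localizing before taking expectations, and explicitly verifying the variation-convergence and Lindeberg conditions that the paper only invokes by name.
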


\begin{proof}
(i) Write
\begin{align}
\hat{A}(t) &= \int_0^t \frac{J(s)}{Y(s)} \, dN(s) \\
&= \int_0^t \frac{J(s)}{Y(s)} \, d(M(s) + A(s)) \\
&= \int_0^t \frac{J(s)}{Y(s)} \, dM(s) + \int_0^t \frac{J(s)}{Y(s)} \alpha(s) Y(s) \, ds \\
&= \int_0^t \frac{J(s)}{Y(s)} \, dM(s) + \int_0^t J(s) \alpha(s) \, ds.
\end{align}
When $Y(s) > 0$ (i.e., $J(s) = 1$), the second term equals $\int_0^t \alpha(s) \, ds = A(t)$. Thus $\hat{A}(t) - A(t) = \int_0^t \frac{J(s)}{Y(s)} \, dM(s)$, which is a local martingale since $J/Y$ is predictable and bounded on $\{Y > 0\}$.

(ii) Taking expectations: $\E[\hat{A}(t) - A(t)] = \E\left[\int_0^t \frac{J(s)}{Y(s)} \, dM(s)\right] = 0$ since $M$ is a martingale.

(iii) The predictable variation is
\begin{equation}
\langle \hat{A} - A \rangle(t) = \int_0^t \frac{J(s)}{Y(s)^2} \lambda(s) \, ds = \int_0^t \frac{J(s) \alpha(s)}{Y(s)} \, ds.
\end{equation}
Under regularity, $Y(s)/n \to y(s)$, so $\Var(\hat{A}(t)) \approx \int_0^t \frac{\alpha(s)}{n \cdot y(s)} \, ds$.

(iv) This follows from the martingale central limit theorem (Rebolledo's theorem).
\end{proof}

\subsection{Group Action and Twin Structure}

Let $G = \langle\varphi\rangle$ be a cyclic group acting on the time domain $E = [0, T]$ (or $\R_+$ or a spatial domain).

\begin{assumption}[Group Action]\label{ass:group}
The map $\varphi: E \to E$ is:
\begin{enumerate}[(G1)]
\item A measurable bijection with measurable inverse;
\item Quasi-measure-preserving: $c_1 \mu(A) \leq \mu(\varphi(A)) \leq c_2 \mu(A)$ for some $c_1, c_2 > 0$;
\item Compatible with the filtration: $\varphi$ maps $\mathcal{F}_t$-measurable events appropriately.
\end{enumerate}
\end{assumption}

\begin{example}[Periodic Intensity]\label{ex:periodic_group}
For $E = [0, T]$ with $T$ a multiple of period $\tau$, let $\varphi(t) = t + \tau \mod T$. The group $G = \Z_{T/\tau}$ acts by cyclic permutation of periods.
\end{example}

\begin{example}[Scale-Invariant Intensity]\label{ex:scale_group}
For $E = \R_+$, let $\varphi(t) = 2t$. The group $G = \langle\varphi\rangle \cong \Z$ acts by dyadic scaling.
\end{example}

\subsection{Twin-Regularity for Intensities}

\begin{definition}[Twin-H\"older Intensity]\label{def:twin_holder}
The intensity $\alpha: E \to \R_+$ belongs to the \textbf{twin-H\"older class} $\mathcal{H}^s_{\mathrm{twin}}$ with constant $C_\alpha$ if
\begin{equation}
\|\alpha_j - \alpha\|_2 \leq C_\alpha h_j^s \quad \text{for all } j \geq 0,
\end{equation}
where $\alpha_j(t) = \int K_j(t, s) \alpha(s) \, d\mu(s)$ is the smoothed intensity at level $j$.
\end{definition}

\begin{proposition}[Orbital Regularity]\label{prop:orbital_regularity}
If $\alpha$ is $G$-invariant (i.e., $\alpha(\varphi(t)) = \alpha(t)$), then $\alpha \in \mathcal{H}^s_{\mathrm{twin}}$ for arbitrarily large $s$, meaning $\alpha$ is infinitely smooth in the twin sense.
\end{proposition}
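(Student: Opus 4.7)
The plan is to show that $G$-invariance forces $\alpha_j(t) \equiv \alpha(t)$ identically, so $\|\alpha_j - \alpha\|_2 = 0$ and the twin-H\"older bound holds trivially for every $s > 0$ with $C_\alpha = 0$. This is the natural formulation of ``infinitely smooth in the twin sense,'' since no positive $s$ can be excluded once the bias vanishes.

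The core step is an equivariance change of variables. In the integral defining $\alpha_j(t)$, I substitute $u = \varphi^{-j}(s)$; by Assumption~\ref{ass:group}(G1)--(G2) this is a bijective measurable transformation, and in the measure-preserving specialization ($c_1 = c_2 = 1$, which covers the key Examples~\ref{ex:periodic_group}--\ref{ex:scale_group}) one has $d\mu(s) = d\mu(u)$. Setting $t' := \varphi^{-j}(t)$, the pulled-back kernel argument becomes $d(t', u)$, giving
\[
\alpha_j(t) \;=\; \int \frac{1}{h_j}\, K\!\left(\frac{d(t', u)}{h_j}\right)\, \alpha(\varphi^j u)\, d\mu(u).
\]
Iterated $G$-invariance $\alpha \circ \varphi^j = \alpha$ then replaces $\alpha(\varphi^j u)$ by $\alpha(u)$. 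The resulting integral has the same form as a classical rescaled smoother of $\alpha$ centered at $t'$; I would identify it with $\alpha(t')$ by noting that the operator $T_j : f \mapsto \alpha_j$ commutes with pullback by $\varphi$, which is the defining equivariance of the twin-kernel construction. Hence $T_j$ preserves the subspace of $G$-invariant functions, and on that subspace a final application of $\alpha(t) = \alpha(\varphi^{-j}t) = \alpha(t')$ closes the loop to give $\alpha_j(t) = \alpha(t)$.

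The principal obstacle is justifying the identification $T_j \alpha = \alpha$ on the $G$-invariant subspace: a Euclidean kernel smoother generally reproduces only constants, not arbitrary $G$-invariant functions. The resolution is to pass to the quotient $E/G$: the pulled-back distance $d(\varphi^{-j}t, \varphi^{-j}s)$ is orbit-respecting, and $G$-invariant $\alpha$ descends to $E/G$ on whose fibres it is constant, so the smoothing reduces to a fibre-wise integration that reproduces constants exactly. Making this rigorous in the genuinely quasi-measure-preserving case ($c_1 < c_2$) requires renormalizing the kernel so that $\int K_j(t,\cdot)\, d\mu = 1$ is preserved across levels and absorbing the Radon--Nikodym derivative of $\varphi^j_\ast \mu$ with respect to $\mu$ into the kernel; with that normalization the reduction is exact rather than approximate, which is precisely what ``arbitrarily large $s$'' demands, since the vanishing of $\|\alpha_j - \alpha\|_2$ trivially dominates $C_\alpha h_j^s$ for any $s$.
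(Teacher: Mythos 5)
Your proof hinges on the identification $T_j\alpha=\alpha$ on the $G$-invariant subspace, and you correctly flag this as the principal obstacle --- but the quotient-space resolution you offer does not close it. For a cyclic group $G=\langle\varphi\rangle$ the orbit of a point is a countable, hence $\mu$-null, set, while the kernel $K_j(t,\cdot)$ is supported on a set of positive measure (a ball of radius $\sim h_j$ about $\varphi^{-j}(t)$, pushed forward by $\varphi^{j}$). That support is transverse to the orbits, not contained in a single fibre of $E\to E/G$, so the smoothing is not ``fibre-wise integration of a constant'': restricted to a fundamental domain, a $G$-invariant function is an arbitrary continuous function, and the kernel average of it over a ball of radius $h_j$ is not the function itself. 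The periodic case (Example~\ref{ex:periodic_group}) makes this concrete: for $\varphi(t)=t+\tau \bmod T$ one has $d(\varphi^{-j}t,\varphi^{-j}s)=d(t,s)$, so $K_j$ collapses to the ordinary bandwidth-$h_j$ kernel and $\alpha_j$ is the classical smoother of the periodic $\alpha$; for $\alpha(t)=1+0.5\sin(2\pi t/\tau)$ its bias is $\tfrac12 h_j^2\,\alpha''(t)\int u^2K(u)\,du+o(h_j^2)\neq 0$. Hence $\|\alpha_j-\alpha\|_2\asymp h_j^2$, so the exact reproduction $\alpha_j\equiv\alpha$ fails, and even the bound $C_\alpha h_j^s$ for $s>2$ fails unless $K$ is a higher-order kernel. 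Your change-of-variables and Radon--Nikodym normalization steps are fine as far as they go; it is the final ``reproduces $G$-invariant functions exactly'' step that is false.

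For comparison, the paper's own proof claims only $\|\alpha_j-\alpha\|_2=O(h_j^s)$ for every $s$ rather than exact equality, but it rests on the same idea --- that the integrand $\alpha(s)-\alpha(t)$ ``vanishes for $s$ in the orbit of $t$'' --- which, as above, concerns a $\mu$-null set and is then patched by an appeal to continuity that only yields the classical modulus-of-continuity bias. So you have not missed an ingredient that the paper supplies; rather, both arguments stall at the same point, and your strengthening to $C_\alpha=0$ makes the gap easier to falsify. A correct statement in this direction would need either a kernel that genuinely averages over the orbit (e.g.\ summing $K_h(t-s-k\tau)$ over $k$, which pools periods but still leaves the within-period smoothing bias), or an additional classical smoothness hypothesis on $\alpha$ restricted to a fundamental domain together with a higher-order kernel.
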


\begin{proof}
Suppose $\alpha(\varphi(t)) = \alpha(t)$ for all $t \in E$. The smoothed intensity at level $j$ is
\begin{equation}
\alpha_j(t) = \int K_j(t, s) \alpha(s) \, d\mu(s).
\end{equation}

By definition of the twin kernel $K_j$, it is constructed by transporting the base kernel along the orbit of $\varphi^j$. Specifically:
\begin{equation}
K_j(t, s) = \frac{1}{h_j} K\left(\frac{d(\varphi^{-j}(t), \varphi^{-j}(s))}{h_j}\right).
\end{equation}

Since $\alpha$ is $G$-invariant, $\alpha(\varphi^{-j}(s)) = \alpha(s)$. Under the change of variables $u = \varphi^{-j}(s)$:
\begin{align}
\alpha_j(t) &= \int \frac{1}{h_j} K\left(\frac{d(\varphi^{-j}(t), u)}{h_j}\right) \alpha(\varphi^j(u)) \, d\mu(\varphi^j(u)) \\
&= \int \frac{1}{h_j} K\left(\frac{d(\varphi^{-j}(t), u)}{h_j}\right) \alpha(u) \cdot J_{\varphi^j}(u) \, d\mu(u),
\end{align}
where $J_{\varphi^j}$ is the Jacobian of $\varphi^j$.

By quasi-measure-preservation (Assumption~\ref{ass:group}(G2)), $c_1^j \leq J_{\varphi^j} \leq c_2^j$.

For $G$-invariant $\alpha$, the kernel $K_j$ acts on $\alpha$ by averaging over the orbit. Since $\alpha$ is constant along orbits, the smoothing operation leaves $\alpha$ essentially unchanged:
\begin{equation}
\alpha_j(t) - \alpha(t) = \int K_j(t, s) [\alpha(s) - \alpha(t)] \, d\mu(s).
\end{equation}

On the same orbit, $\alpha(s) = \alpha(t)$, so the integrand vanishes for $s$ in the orbit of $t$. The contribution from outside the orbit is exponentially small in $j$ due to the localization of $K_j$ at scale $h_j$.

More precisely, if $\alpha$ is $G$-invariant and $K$ has compact support, then for $s$ within bandwidth $h_j$ of $\varphi^{-j}(t)$:
\begin{equation}
|\alpha(s) - \alpha(t)| = |\alpha(\varphi^j(s')) - \alpha(t)| = 0
\end{equation}
for $s' = \varphi^{-j}(s)$ close to $\varphi^{-j}(t)$, since $G$-invariance gives $\alpha(\varphi^j(s')) = \alpha(s')$ and continuity gives $\alpha(s') \approx \alpha(\varphi^{-j}(t)) = \alpha(t)$.

Thus $\|\alpha_j - \alpha\|_2 = O(h_j^s)$ for any $s > 0$, proving $\alpha \in \mathcal{H}^s_{\mathrm{twin}}$ for all $s$.
\end{proof}

\section{The TwinKernel Intensity Estimator}\label{sec:estimator}

\subsection{Definition}

\begin{definition}[Twin-Kernel Hierarchy for Intensities]\label{def:twin_hierarchy}
Let $K: \R \to \R_+$ be a base kernel satisfying Assumption~\ref{ass:kernel} below, and let $\{h_j\}_{j \geq 0}$ be a decreasing bandwidth sequence. The twin-kernel at level $j$ is
\begin{equation}
K_j(t, s) := \frac{1}{h_j} K\left(\frac{d(\varphi^{-j} \cdot t, \varphi^{-j} \cdot s)}{h_j}\right).
\end{equation}
\end{definition}

\begin{assumption}[Base Kernel]\label{ass:kernel}
The kernel $K: \R \to \R_+$ satisfies:
\begin{enumerate}[(K1)]
\item $\int K(u) \, du = 1$;
\item $\|K\|_\infty < \infty$;
\item $\supp(K) \subseteq [-1, 1]$;
\item $K$ is Lipschitz continuous with constant $L_K$.
\end{enumerate}
\end{assumption}

\begin{definition}[Level-$j$ TwinKernel Intensity Estimator]\label{def:estimator}
The TwinKernel intensity estimator at level $j$ is
\begin{equation}
\hat{\alpha}_j(t) := \int_0^T K_j(t, s) \frac{J(s)}{Y(s)} \, dN(s) = \sum_{i: X_i \leq T} K_j(t, X_i) \frac{\Delta_i}{Y(X_i)},
\end{equation}
where the sum is over observed event times.
\end{definition}

\begin{remark}
When $j = 0$ and $\varphi = \mathrm{id}$, the estimator $\hat{\alpha}_0$ reduces to the classical Ramlau-Hansen kernel estimator.
\end{remark}

\subsection{Bias and Variance}

Let $\tilde{\alpha}_j(t) := \E[\hat{\alpha}_j(t)|Y]$ denote the conditional expectation given the at-risk process.

\begin{proposition}[Conditional Bias]\label{prop:conditional_bias}
\begin{equation}
\tilde{\alpha}_j(t) - \alpha_j(t) = O(n^{-1}),
\end{equation}
where $\alpha_j(t) = \int K_j(t, s) \alpha(s) \, ds$ is the target smoothed intensity.
\end{proposition}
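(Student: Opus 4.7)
The plan is to use the Doob-Meyer decomposition of the counting process $N$ to compute $\tilde{\alpha}_j(t)$ explicitly, and then to show that the discrepancy between $\tilde{\alpha}_j(t)$ and the smoothed target $\alpha_j(t)$ comes entirely from the event $\{Y(s)=0\}$, whose probability decays exponentially in $n$.

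First, I would substitute $dN(s) = dM(s) + \alpha(s) Y(s)\, ds$ from the multiplicative intensity model into Definition~\ref{def:estimator} to obtain the decomposition
\begin{equation*}
\hat{\alpha}_j(t) = \int_0^T K_j(t,s)\,\frac{J(s)}{Y(s)}\,dM(s) \;+\; \int_0^T K_j(t,s)\, J(s)\,\alpha(s)\, ds,
\end{equation*}
where we used $J(s)Y(s)/Y(s) = J(s)$ (interpreted as $0$ when $Y(s)=0$). The integrand $K_j(t,s)J(s)/Y(s)$ is predictable and bounded by $\|K\|_\infty/h_j$ on $\{Y > 0\}$, so the stochastic integral is a square-integrable martingale.

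Second, I would take the conditional expectation given the at-risk process $Y$. Because the stochastic integral against $M$ has vanishing conditional mean under the multiplicative intensity assumption, this gives
\begin{equation*}
\tilde{\alpha}_j(t) \;=\; \int_0^T K_j(t,s)\, J(s)\,\alpha(s)\, ds \;=\; \alpha_j(t) \;-\; \int_0^T K_j(t,s)\,\mathbf{1}_{\{Y(s)=0\}}\,\alpha(s)\, ds.
\end{equation*}
The difference $\tilde\alpha_j(t) - \alpha_j(t)$ is therefore pointwise bounded by $(\|K\|_\infty/h_j)\,\|\alpha\|_\infty\,\mu\bigl(\{s \in \supp K_j(t,\cdot):Y(s)=0\}\bigr)$.

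Third, I would invoke the standard large-deviation estimate for the at-risk process: under the usual regularity hypotheses on the observation scheme (so that $\pi(s):=\Prob(X_1 \geq s)$ is bounded away from zero on the relevant support of $K_j(t,\cdot)$), one has $\Prob(Y(s)=0)=(1-\pi(s))^n \leq e^{-cn}$ for some $c>0$. Taking expectations therefore yields $\E\bigl[|\tilde\alpha_j(t)-\alpha_j(t)|\bigr]=O(e^{-cn})=O(n^{-1})$, as claimed. The main subtlety, and the step to be careful with, is the conditional-expectation argument in the second step: I would need to verify that the predictable bound $K_j(t,s)J(s)/Y(s) \leq \|K\|_\infty/h_j$ on $\{Y>0\}$ provides enough integrability to apply the optional stopping theorem to the stochastic integral under conditioning on the filtration generated by $Y$, and to confirm that the compensator of $N$ with respect to this enlarged conditioning still equals $\int \alpha Y\, ds$ under Assumption~\ref{ass:group}.
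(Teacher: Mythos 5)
Your proposal is correct and follows essentially the same route as the paper: compute the conditional expectation given $Y$ (the martingale part vanishes), identify $\tilde{\alpha}_j(t)=\int_0^T K_j(t,s)\,J(s)\,\alpha(s)\,ds$, and attribute the entire discrepancy from $\alpha_j(t)$ to the event $\{Y(s)=0\}$ on the support of $K_j(t,\cdot)$. You are in fact slightly more explicit than the paper, which asserts the $O(n^{-1})$ bound directly from $\Prob(Y(s)=0 \text{ for some } s)$, whereas you quantify it as $(1-\pi(s))^n = O(e^{-cn})$ and flag the measurability subtlety of conditioning on the path of $Y$ that the paper glosses over.
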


\begin{proof}
We compute the conditional expectation:
\begin{align}
\tilde{\alpha}_j(t) &= \E\left[\hat{\alpha}_j(t) \Big| Y\right] = \int_0^T K_j(t, s) \frac{J(s)}{Y(s)} \E[dN(s) | Y] \\
&= \int_0^T K_j(t, s) \frac{J(s)}{Y(s)} \lambda(s) \, ds = \int_0^T K_j(t, s) J(s) \alpha(s) \, ds.
\end{align}

On the set $\{Y(s) > 0\}$, we have $J(s) = 1$. Under the assumption that $Y(s) > 0$ for all $s$ in the support of $K_j(t, \cdot)$ with high probability:
\begin{equation}
\tilde{\alpha}_j(t) = \alpha_j(t) + O(\Prob(Y(s) = 0 \text{ for some } s)) = \alpha_j(t) + O(n^{-1}).
\end{equation}
\end{proof}

\begin{proposition}[Approximation Bias]\label{prop:approx_bias}
If $\alpha \in \mathcal{H}^s_{\mathrm{twin}}$, then
\begin{equation}
\|\alpha_j - \alpha\|_\infty \leq C_\alpha h_j^s.
\end{equation}
\end{proposition}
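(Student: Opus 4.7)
The plan is to reduce the pointwise bound to a standard kernel approximation estimate in the ``base'' coordinates obtained by pulling back along $\varphi^{-j}$. Since $K_j(t,\cdot)$ integrates (up to a Jacobian factor bounded between $c_1^j$ and $c_2^j$ by Assumption~\ref{ass:group}(G2)) to one, I would first write
\[
\alpha_j(t)-\alpha(t) \;=\; \int K_j(t,s)\bigl[\alpha(s)-\alpha(t)\bigr]\, d\mu(s),
\]
plus a lower-order term accounting for the exact normalisation, so that only the local behaviour of $\alpha$ near $t$ on the orbit scale $h_j$ matters. This mirrors the structure already exploited in the proof of Proposition~\ref{prop:orbital_regularity}.

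Next, setting $u=\varphi^{-j}(t)$ and $v=\varphi^{-j}(s)$ and using Definition~\ref{def:twin_hierarchy}, the integral transforms into
\[
\int \tfrac{1}{h_j}K\!\bigl(d(u,v)/h_j\bigr)\bigl[\beta(v)-\beta(u)\bigr]\, J_{\varphi^j}(v)\, d\mu(v),
\]
with $\beta:=\alpha\circ\varphi^j$. By Assumption~\ref{ass:kernel}(K3) the support of $K$ restricts $v$ to a $h_j$-neighbourhood of $u$; on such a neighbourhood the twin-H\"older hypothesis, read in the pulled-back coordinates, controls $|\beta(v)-\beta(u)|$ by $C_\alpha h_j^s$. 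Combining this pointwise bound with the unit $L^1$ mass of $K$ and the uniform Jacobian bound $c_2^j$ delivers the claimed $\|\alpha_j-\alpha\|_\infty\le C_\alpha h_j^s$, after absorbing a constant that depends only on $c_1,c_2,\|K\|_\infty$ into $C_\alpha$.

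The main obstacle I anticipate is reconciling Definition~\ref{def:twin_holder}, which expresses the twin-H\"older hypothesis in $L^2$, with the $L^\infty$ conclusion claimed here. I see two routes. The first, cleaner one is to interpret the twin-H\"older condition \emph{pointwise} in the base coordinates; this is natural because $K_j$ operates orbit by orbit and the condition then amounts to the transported function $\beta$ satisfying a classical H\"older estimate, which directly implies the required $L^\infty$ control. The second route keeps the $L^2$ definition and upgrades to $L^\infty$ by a Sobolev-type embedding along orbit slices, at the cost of a factor $h_j^{-1/2}$ that has to be absorbed into a slightly stronger regularity exponent. I would adopt the first route and flag in the statement that Definition~\ref{def:twin_holder} is to be read as a pointwise bound along each orbit, so that the passage from $\|\cdot\|_2$ to $\|\cdot\|_\infty$ is trivial rather than a loss.
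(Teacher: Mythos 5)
Your proposal follows essentially the same route as the paper's proof: write $\alpha_j(t)-\alpha(t)=\int K_j(t,s)[\alpha(s)-\alpha(t)]\,d\mu(s)$, pull back along $\varphi^{-j}$, use the compact support of $K$ (Assumption~\ref{ass:kernel}(K3)) to localize to an $h_j$-neighbourhood in the base coordinates, and conclude from a H\"older-type bound on the local deviation. The only substantive difference is that you make the change of variables and the Jacobian bookkeeping explicit, whereas the paper bounds the integral directly in the original coordinates.

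The genuinely valuable part of your write-up is the obstacle you flag at the end, and you are right to flag it: Definition~\ref{def:twin_holder} controls $\|\alpha_j-\alpha\|_2$ only, and an $L^2$ bound of order $h_j^s$ cannot by itself yield the claimed $L^\infty$ bound (a function can deviate from its smoothing by $O(1)$ on a set of measure $o(h_j^{2s})$ while still satisfying the $L^2$ condition). The paper's own proof silently commits to your first route: the sentence ``the local deviation $|\alpha(s)-\alpha(t)|$ for $|s-t|\lesssim h_j$ (in the transported coordinates) is bounded by $C h_j^s$'' is precisely a pointwise H\"older hypothesis on $\alpha\circ\varphi^j$, which is strictly stronger than, and not implied by, the stated $L^2$ definition. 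So your diagnosis is correct, and your proposed fix (read the twin-H\"older condition pointwise along orbits, or state the proposition under that stronger hypothesis) is the right one; the Sobolev-embedding alternative would indeed cost a factor and change the exponent. One minor caveat applying equally to your argument and the paper's: the Jacobian factor from Assumption~\ref{ass:group}(G2) is only bounded by $c_1^j$ and $c_2^j$, so the absorbed constant is uniform in $j$ only if $\varphi$ is measure-preserving ($c_1=c_2=1$); otherwise the normalization term you call ``lower-order'' can grow with $j$ and should be tracked.
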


\begin{proof}
By the definition of the twin-H\"older class, $\|\alpha_j - \alpha\|_2 \leq C_\alpha h_j^s$. For the $L^\infty$ bound:
\begin{align}
|\alpha_j(t) - \alpha(t)| &= \left|\int K_j(t, s) [\alpha(s) - \alpha(t)] \, ds\right| \leq \int K_j(t, s) |\alpha(s) - \alpha(t)| \, ds.
\end{align}

For $\alpha \in \mathcal{H}^s_{\mathrm{twin}}$, the local deviation $|\alpha(s) - \alpha(t)|$ for $|s - t| \lesssim h_j$ (in the transported coordinates) is bounded by $C h_j^s$. Thus $|\alpha_j(t) - \alpha(t)| \leq C_\alpha h_j^s$.
\end{proof}

\begin{proposition}[Variance]\label{prop:variance}
Under the multiplicative intensity model,
\begin{equation}
\Var(\hat{\alpha}_j(t)|Y) = \int_0^T K_j(t, s)^2 \frac{J(s)}{Y(s)^2} \lambda(s) \, ds \leq \frac{C \|K\|_\infty^2 \alpha(t)}{n h_j^{d_{\mathrm{eff}}}},
\end{equation}
where $n$ is the expected number at risk, $d_{\mathrm{eff}}$ is the effective dimension.
\end{proposition}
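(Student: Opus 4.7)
The plan is to first establish the exact identity via the martingale decomposition, and then upgrade it to the stated upper bound by exploiting non-negativity of the kernel together with a concentration bound on the at-risk process.

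For the identity, I would write $dN(s)=\lambda(s)\,ds+dM(s)$ using the innovation martingale of Definition~\ref{def:innovation}. Since the conditional mean $\tilde{\alpha}_j(t)=\E[\hat\alpha_j(t)\mid Y]$ already absorbs the $\lambda\,ds$ piece (as computed in Proposition~\ref{prop:conditional_bias}), the conditional centered estimator is the stochastic integral
\begin{equation}
\hat\alpha_j(t)-\tilde\alpha_j(t)=\int_0^T K_j(t,s)\,\frac{J(s)}{Y(s)}\,dM(s).
\end{equation}
The integrand is predictable and bounded on $\{Y>0\}$, so the predictable quadratic variation formula in Proposition~\ref{prop:innovation}(iii) gives
\begin{equation}
\Var(\hat\alpha_j(t)\mid Y)=\int_0^T K_j(t,s)^2\,\frac{J(s)^2}{Y(s)^2}\,d\langle M\rangle(s)=\int_0^T K_j(t,s)^2\,\frac{J(s)}{Y(s)^2}\,\lambda(s)\,ds,
\end{equation}
using $J^2=J$ and $d\langle M\rangle=\lambda\,ds$. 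This is the equality asserted in the proposition.

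For the upper bound, I would substitute the multiplicative form $\lambda(s)=\alpha(s)Y(s)$ to reduce the integral to $\int K_j(t,s)^2\,J(s)\alpha(s)/Y(s)\,ds$. Since $K\geq0$, I can factor one copy of $K_j$ out using the sup-norm bound $\|K_j(t,\cdot)\|_\infty\leq\|K\|_\infty/h_j^{d_{\mathrm{eff}}}$, which follows from the definition of the twin kernel once $d_{\mathrm{eff}}$ is interpreted as the local dimension of the orbit space on which $K_j$ concentrates. This yields
\begin{equation}
\int_0^T K_j(t,s)^2\,\frac{J(s)\alpha(s)}{Y(s)}\,ds\leq\frac{\|K\|_\infty}{h_j^{d_{\mathrm{eff}}}}\int_0^T K_j(t,s)\,\frac{J(s)\alpha(s)}{Y(s)}\,ds.
\end{equation}
On the high-probability event $\{Y(s)\geq c\,n\,y(s)\text{ for all }s\in\supp K_j(t,\cdot)\}$, the ratio $J/Y$ is bounded by $1/(cn\,y_{\min})$; the remaining integral is then at most a constant times the smoothed intensity $\alpha_j(t)$, which by Proposition~\ref{prop:approx_bias} differs from $\alpha(t)$ by $O(h_j^s)$ and is therefore $\leq C\alpha(t)$ for $h_j$ small.

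The main obstacle is controlling the denominator $Y(s)$, which can degenerate near the right end of the observation window and, after transport by $\varphi^{-j}$, on small subsets of the orbit. The cleanest route is a uniform Bernstein inequality applied to $Y(s)=\sum_i\mathbf{1}_{X_i\geq s}$, giving $\Prob(\inf_{s\in S}Y(s)/n\leq\tfrac12 y(s))\leq e^{-cn}$ for any compact $S$ on which $\inf y>0$; this lets me replace the event-conditional bound by an unconditional one at the cost of an additive $o(1/nh_j^{d_{\mathrm{eff}}})$ term. A secondary subtlety is pinning down $d_{\mathrm{eff}}$ precisely: it is the dimension of the quotient $E/G_t^{\mathrm{loc}}$ near $t$, and the $h_j^{-d_{\mathrm{eff}}}$ scaling of $\|K_j\|_\infty$ must be justified via the Jacobian of $\varphi^j$ under Assumption~\ref{ass:group}(G2), which contributes only a bounded multiplicative factor absorbed into the constant $C$.
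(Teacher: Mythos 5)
Your argument is correct in substance. For the identity, you and the paper do the same thing: write $dN = dM + \lambda\,ds$, identify $\hat\alpha_j(t)-\tilde\alpha_j(t)$ as a stochastic integral against the innovation martingale, and read off the predictable variation (the paper compresses this into ``by the properties of counting process integrals,'' citing what is spelled out in Proposition~\ref{prop:martingale_decomp}). For the upper bound your route genuinely differs, in two ways. First, you factor out only one copy of $\|K_j(t,\cdot)\|_\infty$ and recognize the remaining integral as essentially $\alpha_j(t)$, which via Proposition~\ref{prop:approx_bias} (plus Assumption~\ref{ass:intensity}(A2) so that $\alpha(t)$ is bounded below) yields the \emph{local} bound $C\|K\|_\infty\,\alpha(t)/(n h_j^{d_{\mathrm{eff}}})$ actually asserted in the statement; the paper instead bounds $\int K_j^2\alpha$ crudely and ends up with $\|\alpha\|_\infty$ in the numerator, so your version is closer to the claim as written. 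Second, you control $Y$ by a Bernstein concentration argument on a high-probability event, whereas the paper simply invokes the deterministic lower bound $Y(s)\ge n\,y_{\min}$ of Assumption~\ref{ass:atrisk}(Y1). Note that the proposition is a statement about the \emph{conditional} variance given $Y$, so under (Y1) the deterministic bound is the right tool and your concentration step is unnecessary; as you yourself observe, the event-based argument converts the claim into an unconditional or on-an-event bound, which is a slightly different statement. Finally, both you and the paper wave at the Jacobian of $\varphi^j$: Assumption~\ref{ass:group}(G2) only gives $c_1^j\le J_{\varphi^j}\le c_2^j$, which is a bounded factor only for fixed $j$ or when $c_1=c_2=1$, so your phrase ``bounded multiplicative factor absorbed into $C$'' needs that caveat --- but the paper's own proof has the same gap.
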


\begin{proof}
By the properties of counting process integrals:
\begin{align}
\Var(\hat{\alpha}_j(t)|Y) &= \int_0^T K_j(t, s)^2 \frac{J(s)}{Y(s)^2} \lambda(s) \, ds = \int_0^T K_j(t, s)^2 \frac{J(s) \alpha(s)}{Y(s)} \, ds.
\end{align}

Under Assumption~\ref{ass:atrisk}(Y1), $Y(s) \geq n \cdot y_{\min}$, so:
\begin{equation}
\Var(\hat{\alpha}_j(t)|Y) \leq \frac{1}{n \cdot y_{\min}} \int_0^T K_j(t, s)^2 \alpha(s) \, ds \leq \frac{C \|K\|_\infty^2 \|\alpha\|_\infty}{n h_j^{d_{\mathrm{eff}}}}.
\end{equation}
\end{proof}

\subsection{Martingale Representation}

\begin{proposition}[Martingale Decomposition]\label{prop:martingale_decomp}
\begin{equation}
\hat{\alpha}_j(t) - \tilde{\alpha}_j(t) = \int_0^T K_j(t, s) \frac{J(s)}{Y(s)} \, dM(s),
\end{equation}
where $M$ is the innovation martingale. This is a zero-mean martingale with predictable variation
\begin{equation}
\langle \hat{\alpha}_j(t) - \tilde{\alpha}_j(t) \rangle = \int_0^T K_j(t, s)^2 \frac{J(s)}{Y(s)^2} \lambda(s) \, ds.
\end{equation}
\end{proposition}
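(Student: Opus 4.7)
The plan is to mirror the argument used for the Nelson--Aalen representation in Proposition~\ref{prop:nelson_aalen}(i), with the weighted kernel $K_j(t,\cdot)$ playing the role of the indicator $\mathbf{1}_{[0,\tau]}$, and then to read off the predictable variation from the general stochastic integral formula proved in Proposition~\ref{prop:innovation}(iii).

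First I would substitute the multiplicative decomposition $dN(s) = dM(s) + \alpha(s) Y(s)\, ds$ into the defining formula for $\hat{\alpha}_j(t)$, which splits the estimator as
\begin{equation*}
\hat{\alpha}_j(t) = \int_0^T K_j(t,s) \frac{J(s)}{Y(s)}\, dM(s) + \int_0^T K_j(t,s) J(s) \alpha(s)\, ds.
\end{equation*}
The second term is exactly $\tilde{\alpha}_j(t)$ by the identification established in the proof of Proposition~\ref{prop:conditional_bias}, so rearranging yields the claimed identity.

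Next I would verify that for each fixed $t$, the integrand $H_t(s) := K_j(t,s) J(s)/Y(s)$ is $(\mathcal{F}_s)$-predictable: $K_j(t,\cdot)$ is a deterministic function of $s$, while $J$ and $1/Y$ (well-defined on $\{J=1\}$) are predictable by construction of the at-risk process. Since $Y$ is integer-valued and $J = \mathbf{1}_{Y \geq 1}$, we have the uniform bound $|H_t(s)| \leq \|K\|_\infty/h_j$, so $H_t$ is bounded. Therefore $u \mapsto \int_0^u H_t(s)\, dM(s)$ is a genuine (not merely local) square-integrable martingale with zero mean, giving the martingale property of $\hat{\alpha}_j(t) - \tilde{\alpha}_j(t)$. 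The predictable variation then follows directly by applying the general identity recalled in Proposition~\ref{prop:innovation}(iii), namely $\langle \int H\, dM \rangle(T) = \int_0^T H(s)^2 \lambda(s)\, ds$, to $H = H_t$ and using $J(s)^2 = J(s)$.

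There is no deep obstacle here; the main subtlety, which is technical rather than conceptual, is keeping the evaluation point $t$ firmly distinct from the time variable $s$ of the counting process. Because $t$ is a fixed parameter and $K_j(t,\cdot)$ is deterministic, the predictability of the integrand as a process in $s$ is automatic and no enlargement of filtration or stopping argument is needed. The conditioning on $Y$ in the definition of $\tilde{\alpha}_j$ is likewise harmless since $Y$ is already adapted, so it does not perturb either the decomposition or the variation computation.
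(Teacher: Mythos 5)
Your proposal is correct and follows essentially the same route as the paper: substitute $dN = dM + \lambda\,ds$, identify the compensator part with $\tilde{\alpha}_j(t)$, and read off the predictable variation from the standard formula for stochastic integrals against the innovation martingale. The extra care you take --- verifying predictability and the bound $|H_t(s)| \leq \|K\|_\infty/h_j$ to upgrade from a local to a genuine square-integrable martingale, and noting $J^2 = J$ to match the stated form of the variation --- is a welcome tightening of details the paper glosses over, but it is the same argument.
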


\begin{proof}
Since $dN(s) = dM(s) + \lambda(s) \, ds$:
\begin{align}
\hat{\alpha}_j(t) &= \int_0^T K_j(t, s) \frac{J(s)}{Y(s)} \, dM(s) + \int_0^T K_j(t, s) \frac{J(s)}{Y(s)} \lambda(s) \, ds \\
&= \int_0^T K_j(t, s) \frac{J(s)}{Y(s)} \, dM(s) + \tilde{\alpha}_j(t).
\end{align}

The first term is a stochastic integral with respect to a martingale, hence a local martingale with mean zero. For the predictable variation:
\begin{equation}
\langle \hat{\alpha}_j(t) - \tilde{\alpha}_j(t) \rangle = \int_0^T K_j(t, s)^2 \frac{J(s)^2}{Y(s)^2} \lambda(s) \, ds.
\end{equation}
\end{proof}

\subsection{Penalized Model Selection}

\begin{definition}[Contrast Function]\label{def:contrast}
The contrast function at level $j$ is
\begin{equation}
\gamma_n(j) := \int_0^T \hat{\alpha}_j(t)^2 \, dt - 2 \int_0^T \hat{\alpha}_j(t) \frac{J(t)}{Y(t)} \, dN(t).
\end{equation}
\end{definition}

\begin{proposition}[Unbiasedness of Contrast]\label{prop:contrast_unbiased}
\begin{equation}
\E[\gamma_n(j)] = \E[\|\hat{\alpha}_j - \alpha\|_2^2] - \|\alpha\|_2^2 + o(1).
\end{equation}
\end{proposition}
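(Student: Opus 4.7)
The plan is to show that $\gamma_n(j) + \|\alpha\|_2^2$ is an essentially unbiased estimator of $\|\hat{\alpha}_j - \alpha\|_2^2$. Starting from the identity
\begin{equation*}
\|\hat{\alpha}_j - \alpha\|_2^2 = \|\hat{\alpha}_j\|_2^2 - 2\langle \hat{\alpha}_j, \alpha\rangle + \|\alpha\|_2^2,
\end{equation*}
the target reduces to showing that $\E[\int_0^T \hat{\alpha}_j(t) \frac{J(t)}{Y(t)} \, dN(t)] = \langle \E[\hat{\alpha}_j], \alpha \rangle + o(1)$, where the inner product equals $\int_0^T \E[\hat{\alpha}_j(t)] \alpha(t) \, dt$ by Fubini.

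The first step is to apply the Doob–Meyer decomposition $dN(t) = dM(t) + \alpha(t) Y(t)\, dt$ to the contrast's cross term, producing a ``mean'' piece $\int_0^T \hat{\alpha}_j(t) J(t) \alpha(t) \, dt$ and a ``martingale'' piece $\int_0^T \hat{\alpha}_j(t) \frac{J(t)}{Y(t)} \, dM(t)$. The mean piece has expectation $\int_0^T \E[\hat{\alpha}_j(t) J(t)] \alpha(t) \, dt$; by Proposition~\ref{prop:conditional_bias} and the high-probability bound $\Prob(J(t) = 0) = O(n^{-1})$ inherited from Assumption~\ref{ass:atrisk}(Y1), this equals $\langle \E[\hat{\alpha}_j], \alpha\rangle + o(1)$. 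The substantive issue is that $\hat{\alpha}_j(t)$ is itself a stochastic integral of $dN$, so the integrand of the martingale piece is \emph{not} predictable and its expectation need not vanish.

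To evaluate the martingale piece I would expand $\hat{\alpha}_j(t) = \int K_j(t,u) \frac{J(u)}{Y(u)} \, dN(u)$ and exchange order of integration to obtain a double integral with kernel
\begin{equation*}
f(t,u) := K_j(t,u) \frac{J(t) J(u)}{Y(t) Y(u)},
\end{equation*}
which is symmetric in $(t,u)$ and, conditionally on $Y$, deterministic. Splitting the inner $dN(u)$ again as $dM(u) + \alpha(u) Y(u) du$ produces a cross term $\int\int f(t,u) \alpha(u) J(u) Y(u) du\, dM(t)/(\text{factors})$ whose integrand against $dM(t)$ is $\mathcal{F}_{t-}$-predictable given $Y$, hence has conditional mean zero, and a symmetric double martingale integral $\E[\int\int f(t,u) \, dM(t)\, dM(u)\mid Y]$. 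The latter I would evaluate via the Ito-type identity for counting-process martingales, which after using the symmetry of $f$ and $[M,M](t) = N(t)$ collapses to $\int_0^T f(t,t) \lambda(t) \, dt = K(0) h_j^{-1} \int_0^T \frac{J(t) \alpha(t)}{Y(t)} \, dt$.

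The final step is the size estimate. Taking expectations and using $\E[J(t)/Y(t)] = O(1/(n\, y(t)))$ from Assumption~\ref{ass:atrisk}, this diagonal contribution is $O(1/(n h_j))$, which under the standing bandwidth condition $n h_j \to \infty$ is $o(1)$. Collecting the three pieces gives $\E[\int_0^T \hat{\alpha}_j \frac{J}{Y} dN] = \langle \E[\hat{\alpha}_j], \alpha \rangle + o(1)$, and the advertised identity for $\E[\gamma_n(j)]$ follows at once. The main obstacle is the rigorous handling of the symmetric double stochastic integral on the diagonal; this is a standard but somewhat delicate piece of counting-process calculus that requires the isometry property for the predictable variation together with the identification $[M,M] = N$, and it is where the $K(0)/(nh_j)$ ``degrees-of-freedom'' term — known from related $V$-statistic analyses of intensity estimators — enters and is absorbed into the $o(1)$ remainder.
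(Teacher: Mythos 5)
Your proposal is correct, and it is in fact more careful than the paper's own argument. The paper's proof is a two-line computation that substitutes $dN = dM + \alpha Y\,dt$ into the cross term and dismisses the stochastic piece with the remark that ``the martingale integral has mean zero.'' That remark is not literally justified as stated: in $\int_0^T \hat{\alpha}_j(t)\frac{J(t)}{Y(t)}\,dM(t)$ the integrand $\hat{\alpha}_j(t)$ depends on increments $dN(u)$ with $u>t$ inside the kernel window, so it is not $\mathcal{F}_{t-}$-predictable and the integral is not a martingale. You identify exactly this gap and close it the standard way (as in the Reynaud-Bouret/Comte--Gaiffas line of work the paper cites): expand $\hat{\alpha}_j$ as a second $dN$-integral, peel off the genuinely predictable pieces (which do have mean zero), and reduce the remainder to a symmetric double martingale integral whose expectation is carried entirely by the diagonal, $\E\bigl[\int_0^T f(t,t)\,d[M](t)\bigr]$ with $[M]=N$, giving the degrees-of-freedom term $K(0)h_j^{-1}\E\bigl[\int_0^T J(t)\alpha(t)/Y(t)\,dt\bigr] = O(1/(nh_j))$. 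This makes explicit that the $o(1)$ in the statement actually requires $nh_j\to\infty$ (which is among the paper's standing bandwidth conditions but is invoked nowhere in its proof of this proposition). Two minor points to tighten in a full write-up: (i) the off-diagonal part must be organized as an iterated integral $\int_0^T\bigl(\int_0^{t-}\cdots dM(u)\bigr)dM(t)$ so that the outer integrand is genuinely predictable, rather than appealing only to symmetry; and (ii) the device of ``conditioning on $Y$'' to treat $f(t,u)$ as deterministic should be handled with the same care the paper uses in Proposition~\ref{prop:conditional_bias}, since in the survival setting $Y$ is not independent of $N$ --- under Assumption~\ref{ass:atrisk}(Y1) one can instead use $J\equiv 1$ and the predictability of $1/Y(t)$ directly. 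Neither point affects the validity of your plan.
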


\begin{proof}
Using $dN = dM + \alpha Y \, dt$ and taking expectations (the martingale integral has mean zero):
\begin{align}
\E[\gamma_n(j)] &= \E[\|\hat{\alpha}_j\|_2^2] - 2 \E[\langle \hat{\alpha}_j, \alpha \rangle] + o(1) \\
&= \E[\|\hat{\alpha}_j - \alpha\|_2^2] - \|\alpha\|_2^2 + o(1).
\end{align}
\end{proof}

\begin{definition}[Penalized Selection]\label{def:penalized}
Let $L(j)$ satisfy the Kraft inequality $\sum_j e^{-L(j)} \leq 1$. The selected level is
\begin{equation}
\hat{j} := \arg\min_{j \in \mathcal{J}_n} \left\{\gamma_n(j) + \frac{\lambda L(j)}{n}\right\}.
\end{equation}
The final estimator is $\hat{\alpha} := \hat{\alpha}_{\hat{j}}$.
\end{definition}

\section{Main Theoretical Results}\label{sec:main}

\subsection{Assumptions}

\begin{assumption}[At-Risk Process]\label{ass:atrisk}
\begin{enumerate}[(Y1)]
\item $Y(t) \geq n \cdot y_{\min}$ for some $y_{\min} > 0$ on $[0, T]$;
\item $\sup_{t \in [0,T]} |Y(t)/n - y(t)| \xrightarrow{\Prob} 0$ for some deterministic $y(t) > 0$;
\item $Y$ is quasi-invariant under $G$: $c_1 Y(t) \leq Y(\varphi(t)) \leq c_2 Y(t)$.
\end{enumerate}
\end{assumption}

\begin{assumption}[Baseline Intensity]\label{ass:intensity}
\begin{enumerate}[(A1)]
\item $\alpha: [0, T] \to \R_+$ is bounded: $\|\alpha\|_\infty < \infty$;
\item $\alpha$ is bounded away from zero on its support;
\item $\alpha \in \mathcal{H}^s_{\mathrm{twin}}$ for some $s > 0$.
\end{enumerate}
\end{assumption}

\subsection{Martingale Inequalities}

\begin{lemma}[Lenglart's Inequality]\label{lem:lenglart}
Let $M$ be a local martingale with $M(0) = 0$. For any stopping time $\tau$ and $\epsilon, \delta > 0$,
\begin{equation}
\Prob\left(\sup_{t \leq \tau} |M(t)| \geq \epsilon\right) \leq \frac{\delta}{\epsilon^2} + \Prob(\langle M \rangle(\tau) \geq \delta).
\end{equation}
\end{lemma}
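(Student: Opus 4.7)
The plan is to decompose the event $\{\sup_{t \leq \tau} |M(t)| \geq \epsilon\}$ according to whether the predictable variation has exceeded $\delta$ by time $\tau$, then control the ``controlled variation'' piece via Doob's $L^2$ maximal inequality applied to a localized and stopped version of $M$. The key device is the stopping time $T_\delta := \inf\{t \geq 0 : \langle M \rangle(t) \geq \delta\}$, which is an $(\mathcal{F}_t)$-stopping time since $\langle M \rangle$ is adapted and right-continuous.

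First, the inclusion $\{\tau > T_\delta\} \subseteq \{\langle M \rangle(\tau) \geq \delta\}$ gives the decomposition
\begin{equation*}
\Prob\left(\sup_{t \leq \tau} |M(t)| \geq \epsilon\right) \leq \Prob\left(\sup_{t \leq \tau \wedge T_\delta} |M(t)| \geq \epsilon\right) + \Prob(\langle M \rangle(\tau) \geq \delta),
\end{equation*}
so it suffices to bound the first term by $\delta/\epsilon^2$. To handle the fact that $M$ is only a local martingale, choose a reducing sequence $\sigma_n \uparrow \infty$ for which each $M^{\sigma_n}$ is an $L^2$-bounded martingale. Applying Doob's $L^2$ maximal inequality to the square-integrable martingale $M^{T_\delta \wedge \sigma_n}$ evaluated at $\tau$, and combining it with the martingale isometry $\E[M(\tau \wedge T_\delta \wedge \sigma_n)^2] = \E[\langle M \rangle(\tau \wedge T_\delta \wedge \sigma_n)] \leq \delta$, yields
\begin{equation*}
\Prob\left(\sup_{t \leq \tau \wedge T_\delta \wedge \sigma_n} |M(t)| \geq \epsilon\right) \leq \frac{\delta}{\epsilon^2}.
\end{equation*}
Letting $n \to \infty$ and invoking monotone convergence (the events are nested upward in $n$) transfers the bound to $\tau \wedge T_\delta$ and completes the argument.

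The main subtlety is justifying $\langle M \rangle(T_\delta \wedge \sigma_n \wedge \tau) \leq \delta$, which genuinely uses continuity of $\langle M \rangle$. In the counting-process setting of this paper, $\langle M \rangle(t) = \int_0^t \lambda(s)\,ds$ is absolutely continuous by Proposition~\ref{prop:innovation}(iii), so this holds automatically. In the fully general case where $\langle M \rangle$ may jump, one replaces $T_\delta$ by $\inf\{t : \langle M \rangle(t) > \delta\}$ and exploits $\langle M \rangle(T_\delta -) \leq \delta$; this is routine bookkeeping and does not affect the constant. A second, more cosmetic, issue is that Doob's inequality as usually stated requires a deterministic terminal time, but the standard optional-sampling argument on the bounded stopping time $\tau \wedge T_\delta \wedge \sigma_n$ reduces to that case without effort.
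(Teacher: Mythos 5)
The paper does not prove this lemma at all: Lenglart's inequality is stated as a known tool from the counting-process literature (it is the standard domination inequality of Lenglart, as in Andersen et al.\ (1993)), so there is no in-paper argument to compare against. Your proof is the standard derivation of this special case and is essentially correct: the first-passage time $T_\delta$ of $\langle M\rangle$ past level $\delta$, the inclusion $\{\tau > T_\delta\}\subseteq\{\langle M\rangle(\tau)\ge\delta\}$, and the Chebyshev/Doob bound on the stopped process all go through. Three small points of precision. First, be careful with the name of the maximal inequality: to get the constant $\delta/\epsilon^2$ (rather than $4\delta/\epsilon^2$) you want the weak-type Doob inequality applied to the nonnegative submartingale $M^2$, i.e.\ $\Prob(\sup_{t\le\sigma}|M(t)|\ge\epsilon)\le \epsilon^{-2}\E[M(\sigma)^2]$, not the $L^2$--$L^2$ form with its factor of $4$. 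Second, a reducing sequence making $M^{\sigma_n}$ square-integrable exists precisely because $M$ is \emph{locally} square-integrable, which is implicit in the lemma since $\langle M\rangle$ is assumed to exist; it is worth saying this, as a general local martingale need not admit $\langle M\rangle$. Third, your remark about the discontinuous case understates the work: when $\langle M\rangle$ can jump one needs the predictability of $\langle M\rangle$ and an announcing sequence for the predictable time $T_\delta$ (stopping strictly before the jump), not merely the observation $\langle M\rangle(T_\delta-)\le\delta$. None of this matters in the paper's setting, where $\langle M\rangle(t)=\int_0^t\lambda(s)\,ds$ is continuous, so your argument fully covers the case actually used.
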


\begin{lemma}[Bernstein Inequality for Martingales]\label{lem:bernstein}
Let $M$ be a martingale with bounded jumps $|\Delta M(t)| \leq c$. Then
\begin{equation}
\Prob(|M(T)| \geq x, \langle M \rangle(T) \leq v) \leq 2 \exp\left(-\frac{x^2}{2(v + cx/3)}\right).
\end{equation}
\end{lemma}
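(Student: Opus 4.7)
The plan is to use the classical exponential-supermartingale argument for martingales with bounded jumps, combined with a stopping-time truncation that turns the joint event $\{|M(T)| \geq x,\, \langle M\rangle(T) \leq v\}$ into a clean one-sided Markov bound.

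First, for each $\theta \in (0, 3/c)$, I would introduce the candidate supermartingale
\begin{equation*}
Z_\theta(t) := \exp\!\left(\theta M(t) - g(\theta)\,\langle M\rangle(t)\right), \qquad g(\theta) := \frac{\theta^2}{2(1 - c\theta/3)},
\end{equation*}
with $Z_\theta(0) = 1$, and verify that $Z_\theta$ is a positive supermartingale. The derivation applies Itô's formula (equivalently, the Dol\'eans-Dade exponential expansion) to the semimartingale $\theta M - g(\theta)\langle M\rangle$, then uses the elementary scalar inequality $e^u - 1 - u \leq u^2/(2(1 - u/3))$, valid for $0 \leq u < 3$, applied pointwise to each jump $\theta \,\Delta M(t) \leq c\theta$. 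Predictability of $\langle M\rangle$ is what allows the jump corrections in the exponential to be absorbed into the $g(\theta)\langle M\rangle$ subtraction, leaving a non-positive drift.

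Second, to extract the event $\{\langle M\rangle(T) \leq v\}$, define the stopping time $\tau := \inf\{t \geq 0 : \langle M\rangle(t) > v\}$. Optional stopping gives $\E[Z_\theta(\tau \wedge T)] \leq 1$; on the event $\{\langle M\rangle(T) \leq v\}$ one has $\tau \geq T$, so $M(\tau \wedge T) = M(T)$ and $\langle M\rangle(\tau \wedge T) \leq v$. Markov's inequality therefore yields
\begin{equation*}
\Prob\!\left(M(T) \geq x,\; \langle M\rangle(T) \leq v\right) \leq e^{\,g(\theta) v - \theta x}.
\end{equation*}
Optimizing the exponent over $\theta \in (0, 3/c)$ at the critical value $\theta_\star = x/(v + cx/3)$ produces exponent $-x^2/(2(v + cx/3))$. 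Running the identical argument for the martingale $-M$ (which has the same predictable variation and jump bound) and union-bounding the two one-sided events supplies the factor $2$ and completes the proof.

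The main obstacle is the first step: justifying the supermartingale property of $Z_\theta$ in the presence of jumps. The key analytic ingredient is the scalar inequality on $e^u - 1 - u$, which is precisely what converts the exact exponential jump correction appearing in the Dol\'eans-Dade formula into the Bernstein-form denominator $1 - c\theta/3$; choosing a stronger bound (e.g.\ purely quadratic) would give the weaker sub-Gaussian inequality without the $cx/3$ term. Everything downstream --- optional stopping, Markov, and optimization over $\theta$ --- is routine once the supermartingale step is secured.
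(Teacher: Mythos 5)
The paper states this lemma without any proof --- it is invoked as a classical result (it is essentially van de Geer's exponential inequality for martingales with bounded jumps, also found in Shorack--Wellner), so there is no in-paper argument to compare against. Your sketch is the standard and correct route: the exponential supermartingale $Z_\theta = \exp(\theta M - g(\theta)\langle M\rangle)$ with $g(\theta) = \theta^2/(2(1-c\theta/3))$, the stopping time $\tau = \inf\{t : \langle M\rangle(t) > v\}$ to localize the event $\{\langle M\rangle(T)\le v\}$, Markov's inequality, the choice $\theta_\star = x/(v+cx/3)$ (which one checks lies in $(0,3/c)$ and gives exactly the exponent $-x^2/(2(v+cx/3))$), and a union bound over $\pm M$. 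Two small points to tighten. First, the scalar inequality must be applied to increments $u = \theta\,\Delta M \in [-c\theta, c\theta]$, not just nonnegative ones; this is harmless since $e^u - 1 - u \le u^2/2 \le u^2/(2(1-c\theta/3))$ for $u \le 0$, but it should be said (in the paper's application the jumps happen to be nonnegative, but the lemma as stated allows signed jumps). Second, what the Dol\'eans--Dade expansion delivers directly is a nonnegative local martingale $L_\theta$ whose exact exponential compensator is dominated by $g(\theta)\langle M\rangle$, so that $Z_\theta \le L_\theta$ pathwise; the inequality $\E[Z_\theta(\tau\wedge T)] \le \E[L_\theta(\tau\wedge T)] \le 1$ is all the downstream argument needs, and is slightly easier to justify rigorously than the full supermartingale property of $Z_\theta$ itself (which also holds, but requires the predictability of $\langle M\rangle$ argument you allude to). With those two remarks made explicit, the proof is complete.
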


\begin{lemma}[Rebolledo's CLT]\label{lem:rebolledo}
Let $M_n$ be a sequence of local martingales. If:
\begin{enumerate}[(i)]
\item $\langle M_n \rangle(T) \xrightarrow{\Prob} \sigma^2$ for some $\sigma^2 > 0$;
\item For all $\epsilon > 0$, $\sum_{s \leq T} (\Delta M_n(s))^2 \mathbf{1}_{|\Delta M_n(s)| > \epsilon} \xrightarrow{\Prob} 0$;
\end{enumerate}
then $M_n(T) \xrightarrow{d} \mathcal{N}(0, \sigma^2)$.
\end{lemma}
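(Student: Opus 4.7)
The plan is to prove convergence in distribution by establishing pointwise convergence of characteristic functions, $\E[\exp(i\theta M_n(T))] \to \exp(-\theta^2 \sigma^2/2)$ for every $\theta \in \R$, and then to invoke L\'evy's continuity theorem. The central tool is the complex exponential martingale associated to $M_n$: for each fixed $\theta$, one constructs a local martingale
\begin{equation*}
Z_n^\theta(t) := \exp\{i\theta M_n(t)\} \cdot D_n^\theta(t)^{-1},
\end{equation*}
where $D_n^\theta$ is the Dol\'eans-Dade exponential encoding the predictable and jump characteristics of $M_n$. After a standard localization argument and optional stopping at $T$, one obtains the identity $\E[\exp(i\theta M_n(T))] = \E[D_n^\theta(T)]$, so the task reduces to showing $D_n^\theta(T) \xrightarrow{\Prob} \exp(-\theta^2 \sigma^2/2)$ and then lifting this to convergence in expectation.

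First I would decompose $M_n = M_n^c + M_n^d$ into its continuous and purely discontinuous parts, which gives the explicit factorization
\begin{equation*}
D_n^\theta(t) = \exp\bigl(-\tfrac{1}{2}\theta^2 \langle M_n^c\rangle(t)\bigr) \prod_{s \leq t} (1 + i\theta\,\Delta M_n(s))\exp(-i\theta\,\Delta M_n(s)).
\end{equation*}
Assumption (i) delivers $\langle M_n\rangle(T) \xrightarrow{\Prob} \sigma^2$, and applying assumption (ii) with $\epsilon \downarrow 0$ forces the jump contribution to the predictable variation to vanish, so $\langle M_n^c \rangle(T) \xrightarrow{\Prob} \sigma^2$ as well, and the continuous factor tends in probability to $\exp(-\theta^2\sigma^2/2)$.

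For the jump-product factor, I would truncate at a small level $\epsilon > 0$. For $|u| \leq \epsilon$, a Taylor expansion yields
\begin{equation*}
\log\bigl[(1 + i\theta u)e^{-i\theta u}\bigr] = -\tfrac{1}{2}\theta^2 u^2 + R(\theta, u), \qquad |R(\theta, u)| \leq C_\theta\, \epsilon\, u^2,
\end{equation*}
so the small-jump sub-product is asymptotic to $\exp(-\tfrac{1}{2}\theta^2 \sum_{s \leq T,\, |\Delta M_n(s)| \leq \epsilon} (\Delta M_n(s))^2)$, which precisely cancels the jump mass that was missing from $\langle M_n^c\rangle$ in the continuous factor. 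Condition (ii) shows that the product of large-jump factors converges in probability to $1$, since the number and size of large jumps are asymptotically negligible. Combining these pieces yields $D_n^\theta(T) \xrightarrow{\Prob} \exp(-\theta^2 \sigma^2/2)$.

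The main obstacle will be lifting this probabilistic convergence to convergence of expectations, since $D_n^\theta(T)$ is complex-valued and not automatically uniformly integrable. I would handle this via the modulus bound $|(1 + i\theta u)e^{-i\theta u}| = \sqrt{1 + \theta^2 u^2} \leq \exp(\tfrac{1}{2}\theta^2 u^2)$, which yields $|D_n^\theta(T)| \leq \exp(\tfrac{1}{2}\theta^2 [M_n](T))$; combined with a truncation argument and the convergence of $[M_n](T)$, this provides the uniform integrability required for dominated convergence. The characteristic-function limit then follows, and L\'evy's continuity theorem completes the proof. An alternative, more classical route would apply the Dambis--Dubins--Schwarz time change to $M_n^c$ and treat $M_n^d$ via a separate Lindeberg argument for jump martingales, but the exponential-martingale approach unifies both components and delivers the Gaussian limit in one stroke.
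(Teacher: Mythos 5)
First, a point of comparison: the paper itself supplies no proof of this lemma. Rebolledo's theorem is stated and then used as a black box (its proof is in the counting-process literature, e.g.\ Andersen, Borgan, Gill and Keiding (1993)), so there is no in-paper argument to measure you against. Your strategy --- characteristic functions plus L\'evy continuity, the Dol\'eans-Dade exponential of $i\theta M_n$, a continuous/jump factorization, and a truncation/uniform-integrability step --- is exactly the classical route, and most of the sketch is structurally sound.

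However, one step as written is false and would derail the computation. You claim that condition (ii), with $\epsilon \downarrow 0$, ``forces the jump contribution to the predictable variation to vanish, so $\langle M_n^c\rangle(T) \xrightarrow{\Prob} \sigma^2$.'' Condition (ii) only kills the \emph{large}-jump part of the quadratic variation; the small-jump part $\sum_{|\Delta M_n(s)| \leq \epsilon} (\Delta M_n(s))^2$ need not vanish and in general carries \emph{all} of the variance. This matters acutely here: the innovation martingales of this paper have the form $c_n(N - A)$ with $A$ continuous, so they are purely discontinuous, $\langle M_n^c\rangle \equiv 0$, and $[M_n](T) = \sum_{s \leq T}(\Delta M_n(s))^2 \to \sigma^2$. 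Under your claim the continuous factor would contribute $e^{-\theta^2\sigma^2/2}$ \emph{and} the small-jump product would contribute another $e^{-\theta^2\sigma^2/2}$ (you yourself say it ``cancels the jump mass that was missing from $\langle M_n^c\rangle$''), producing the wrong limit $e^{-\theta^2\sigma^2}$. The correct bookkeeping is that the continuous factor and the small-jump product together yield $\exp(-\tfrac{1}{2}\theta^2 [M_n](T))$ up to negligible remainders, and one must separately prove $[M_n](T) \xrightarrow{\Prob} \sigma^2$ from (i) and (ii) --- e.g.\ by applying Lenglart's inequality (Lemma~\ref{lem:lenglart}) to the local martingale $[M_n] - \langle M_n\rangle$; you invoke ``the convergence of $[M_n](T)$'' in the uniform-integrability step without establishing it. A smaller imprecision: optional stopping gives $\E[Z_n^\theta(T)] = 1$, i.e.\ $\E[e^{i\theta M_n(T)} D_n^\theta(T)^{-1}] = 1$, not the identity $\E[e^{i\theta M_n(T)}] = \E[D_n^\theta(T)]$; passing from the former to the characteristic-function limit is precisely where the convergence $D_n^\theta(T) \xrightarrow{\Prob} e^{-\theta^2\sigma^2/2}$ and the uniform integrability must be combined, so this step should be stated correctly rather than asserted as an equality.
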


\subsection{Uniform Consistency}

\begin{theorem}[Uniform Consistency]\label{thm:consistency}
Under Assumptions~\ref{ass:kernel}, \ref{ass:group}, \ref{ass:atrisk}, and \ref{ass:intensity}, if $h_j \to 0$ and $nh_j / \log n \to \infty$, then
\begin{equation}
\sup_{t \in [0,T]} |\hat{\alpha}_j(t) - \alpha(t)| \xrightarrow{\Prob} 0.
\end{equation}
\end{theorem}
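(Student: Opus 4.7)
I would decompose the pointwise error into three familiar pieces,
\begin{equation*}
\hat\alpha_j(t)-\alpha(t) \;=\; \bigl[\hat\alpha_j(t)-\tilde\alpha_j(t)\bigr] + \bigl[\tilde\alpha_j(t)-\alpha_j(t)\bigr] + \bigl[\alpha_j(t)-\alpha(t)\bigr],
\end{equation*}
and control each piece uniformly in $t\in[0,T]$. The approximation term is handled directly by Proposition~\ref{prop:approx_bias}: $\sup_t|\alpha_j-\alpha|\leq C_\alpha h_j^s\to 0$ since $h_j\to 0$. The conditional-bias term is trivial under Assumption~\ref{ass:atrisk}(Y1), which guarantees $Y(s)\geq n\,y_{\min}>0$ on $[0,T]$ and hence $J(s)\equiv 1$ and $\tilde\alpha_j\equiv\alpha_j$. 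Thus the real work lies in the stochastic fluctuation $M_j(t):=\hat\alpha_j(t)-\tilde\alpha_j(t)$.

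By Proposition~\ref{prop:martingale_decomp}, for each fixed target $t$ the process $\tau\mapsto\int_0^\tau K_j(t,s)\,J(s)/Y(s)\,dM(s)$ is a mean-zero martingale with predictable variation at time $T$ bounded by $v_n:=C/(nh_j)$ and jumps of size at most $c_n:=\|K\|_\infty/(h_j\,n\,y_{\min})$. The Bernstein inequality (Lemma~\ref{lem:bernstein}) then yields
\begin{equation*}
\Prob\bigl(|M_j(t)|\geq\epsilon_n\bigr)\;\leq\;2\exp\!\left(-\frac{\epsilon_n^2}{2(v_n+c_n\epsilon_n/3)}\right),
\end{equation*}
and taking $\epsilon_n:=C_1\sqrt{\log n/(nh_j)}$ makes the jump contribution $c_n\epsilon_n$ negligible against $v_n$ exactly because $nh_j/\log n\to\infty$, leaving a Gaussian-like exponent of order $C_1^2\log n$ and hence a pointwise tail of the form $2n^{-\kappa}$ with $\kappa$ free to be chosen large by inflating $C_1$.

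To pass from pointwise to uniform control I would use a Lipschitz--net argument. Assumption~\ref{ass:kernel}(K4), together with the bilipschitz nature of $\varphi^{-j}$ provided by (G2), makes $t\mapsto K_j(t,s)$ Lipschitz with constant $L_K\,c_2^j/h_j^2$, uniformly in $s$; hence on the high-probability event $\{\hat A(T)\leq C\}$ the map $t\mapsto M_j(t)$ is itself Lipschitz in $t$ with constant polynomial in $n$. Covering $[0,T]$ by a regular grid of spacing $\epsilon_n h_j^2$ produces a net of polynomial cardinality $N_n$, and a union bound gives $\Prob(\sup_t|M_j(t)|\geq 2\epsilon_n)\leq 2N_n n^{-\kappa}+o(1)\to 0$ as soon as $\kappa$ exceeds the polynomial degree of $N_n$. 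The principal obstacle is exactly this three-way bookkeeping: the net cardinality $N_n\asymp h_j^{-2}$, the Lipschitz constant $\asymp h_j^{-2}$, and the Bernstein Gaussian regime $c_n\epsilon_n\ll v_n$ all scale with $h_j$, and the hypothesis $nh_j/\log n\to\infty$ is the tight threshold under which they are simultaneously compatible--mirroring the classical rate condition for Ramlau--Hansen type hazard estimators.
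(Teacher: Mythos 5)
Your proposal follows the same route as the paper's proof: the identical three-term decomposition, Bernstein's inequality for the martingale fluctuation at grid points, and a covering/union-bound argument to pass to the supremum, under the same bandwidth condition $nh_j/\log n\to\infty$. Your version is in fact more carefully worked out than the paper's (explicit net spacing, Lipschitz control of $t\mapsto M_j(t)$, and verification that the Bernstein exponent stays in the Gaussian regime), but it is essentially the same argument.
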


\begin{proof}
\textbf{Step 1: Decomposition.}
\begin{equation}
\hat{\alpha}_j(t) - \alpha(t) = \underbrace{[\hat{\alpha}_j(t) - \tilde{\alpha}_j(t)]}_{\text{stochastic}} + \underbrace{[\tilde{\alpha}_j(t) - \alpha_j(t)]}_{\text{discretization}} + \underbrace{[\alpha_j(t) - \alpha(t)]}_{\text{smoothing bias}}.
\end{equation}

\textbf{Step 2: Stochastic term.}

By Proposition~\ref{prop:martingale_decomp}, $\hat{\alpha}_j(t) - \tilde{\alpha}_j(t) = \int_0^T K_j(t, s) \frac{J(s)}{Y(s)} \, dM(s)$ is a martingale. Its predictable variation is $O(1/(n h_j^{d_{\mathrm{eff}}}))$.

To control the supremum over $t$, we use chaining. Cover $[0, T]$ with $O(T/h_j)$ intervals of length $h_j$. By Bernstein's inequality (Lemma~\ref{lem:bernstein}) at each grid point and union bound:
\begin{equation}
\sup_t |\hat{\alpha}_j(t) - \tilde{\alpha}_j(t)| = O_P\left(\sqrt{\frac{\log n}{n h_j^{d_{\mathrm{eff}}}}}\right).
\end{equation}

\textbf{Step 3: Discretization.} By Proposition~\ref{prop:conditional_bias}, $|\tilde{\alpha}_j(t) - \alpha_j(t)| = O(n^{-1}) \to 0$.

\textbf{Step 4: Smoothing bias.} By continuity of $\alpha$: $|\alpha_j(t) - \alpha(t)| \to 0$ as $h_j \to 0$.

\textbf{Step 5: Combining.} If $h_j \to 0$ and $nh_j / \log n \to \infty$, all three terms vanish uniformly.
\end{proof}

\subsection{Convergence Rates}

\begin{theorem}[$L^2$ Convergence Rate]\label{thm:L2rate}
Under the above assumptions, if $\alpha \in \mathcal{H}^s_{\mathrm{twin}}$ and $h_j = n^{-1/(2s + d_{\mathrm{eff}})}$, then
\begin{equation}
\E\left[\|\hat{\alpha}_j - \alpha\|_2^2\right] \leq C \cdot n^{-\frac{2s}{2s + d_{\mathrm{eff}}}}.
\end{equation}
\end{theorem}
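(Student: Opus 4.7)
The plan is to execute the standard bias--variance decomposition in $L^2$, drawing directly on the pointwise results already proved in Section~\ref{sec:estimator}, and then optimize over the bandwidth $h_j$. Explicitly, I would write
\begin{equation*}
\hat{\alpha}_j(t) - \alpha(t) = \underbrace{[\hat{\alpha}_j(t) - \tilde{\alpha}_j(t)]}_{\text{stochastic}} + \underbrace{[\tilde{\alpha}_j(t) - \alpha_j(t)]}_{\text{discretization}} + \underbrace{[\alpha_j(t) - \alpha(t)]}_{\text{bias}},
\end{equation*}
apply the elementary inequality $(a+b+c)^2 \leq 3(a^2+b^2+c^2)$, and then integrate in $t$ over $[0,T]$. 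This reduces the statement to bounding three integrated terms.

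First I would handle the stochastic term using the martingale representation of Proposition~\ref{prop:martingale_decomp}. Since the inner process is a martingale integral, $\E[(\hat{\alpha}_j(t) - \tilde{\alpha}_j(t))^2]$ equals the expectation of its predictable variation, which is precisely the variance expression bounded in Proposition~\ref{prop:variance} by $C \|K\|_\infty^2 \|\alpha\|_\infty / (n h_j^{d_{\mathrm{eff}}})$. Integrating this uniform-in-$t$ bound over $[0,T]$ gives a contribution of order $T \cdot (n h_j^{d_{\mathrm{eff}}})^{-1}$, where Assumption~\ref{ass:atrisk}(Y1) is what makes the $1/Y(s)^2$ factor integrable. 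The discretization term is controlled by Proposition~\ref{prop:conditional_bias}, giving $O(n^{-2})$, which is of strictly lower order and can be absorbed. The bias term is controlled directly by the twin-H\"older definition (Definition~\ref{def:twin_holder}): $\|\alpha_j - \alpha\|_2^2 \leq C_\alpha^2 h_j^{2s}$.

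Combining these, one obtains
\begin{equation*}
\E\bigl[\|\hat{\alpha}_j - \alpha\|_2^2\bigr] \leq C_1 \frac{1}{n h_j^{d_{\mathrm{eff}}}} + C_2 h_j^{2s} + o\bigl(n^{-2s/(2s+d_{\mathrm{eff}})}\bigr).
\end{equation*}
Balancing the first two terms by setting $1/(n h_j^{d_{\mathrm{eff}}}) \asymp h_j^{2s}$ yields the prescribed bandwidth $h_j = n^{-1/(2s + d_{\mathrm{eff}})}$, and substituting back produces the claimed rate $n^{-2s/(2s+d_{\mathrm{eff}})}$.

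The main technical obstacle is the variance step, specifically justifying that the pointwise conditional variance bound of Proposition~\ref{prop:variance} yields an integrated $L^2$ bound of the same order. This requires (i) passing from conditional to unconditional variance via Assumption~\ref{ass:atrisk}(Y1) so that the $1/Y(s)$ factor is bounded by $(n y_{\min})^{-1}$ uniformly, (ii) using the scaling identity $\int_0^T K_j(t,s)^2 \, ds = O(h_j^{-d_{\mathrm{eff}}})$, which is where the effective dimension $d_{\mathrm{eff}}$ enters and where the quasi-measure-preservation Assumption~\ref{ass:group}(G2) is needed to control the Jacobian introduced by the transport $\varphi^{-j}$, and (iii) a short Fubini argument to interchange the $t$-integral with expectation. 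Everything else is a mechanical combination of results already established.
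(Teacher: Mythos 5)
Your proposal is correct and follows essentially the same route as the paper's proof: a bias--variance decomposition in $L^2$, the variance bounded via the martingale representation and Proposition~\ref{prop:variance}, the bias via the twin-H\"older definition, and the bandwidth balance yielding the rate. The only difference is that you separate out the $O(n^{-2})$ discretization term explicitly and flesh out the Fubini and kernel-scaling details that the paper leaves implicit, which is a refinement rather than a different argument.
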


\begin{proof}
\textbf{Step 1: MSE decomposition.}
\begin{equation}
\E[\|\hat{\alpha}_j - \alpha\|_2^2] = \underbrace{\|\alpha_j - \alpha\|_2^2}_{\text{squared bias}} + \underbrace{\E[\|\hat{\alpha}_j - \alpha_j\|_2^2]}_{\text{variance}}.
\end{equation}

\textbf{Step 2: Bias bound.} By twin-H\"older assumption, $\|\alpha_j - \alpha\|_2^2 \leq C_\alpha^2 h_j^{2s}$.

\textbf{Step 3: Variance bound.} Using the martingale representation:
\begin{equation}
\E[\|\hat{\alpha}_j - \alpha_j\|_2^2] \leq \frac{C}{n h_j^{d_{\mathrm{eff}}}}.
\end{equation}

\textbf{Step 4: Optimization.} Total MSE: $C_\alpha^2 h_j^{2s} + C/(n h_j^{d_{\mathrm{eff}}})$.

Optimizing: $h_j \asymp n^{-1/(2s + d_{\mathrm{eff}})}$, giving rate $n^{-2s/(2s + d_{\mathrm{eff}})}$.
\end{proof}

\begin{theorem}[Uniform Convergence Rate]\label{thm:uniform_rate}
Under the assumptions of Theorem~\ref{thm:L2rate},
\begin{equation}
\E\left[\|\hat{\alpha}_j - \alpha\|_\infty\right] \leq C \left(\frac{\log n}{n}\right)^{\frac{s}{2s + d_{\mathrm{eff}}}}.
\end{equation}
\end{theorem}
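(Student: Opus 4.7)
The plan is to run the same bias--variance decomposition as in Theorem~\ref{thm:L2rate}, but measure the stochastic fluctuation in sup-norm, picking up an extra logarithmic factor from a uniform control argument. Write
\begin{equation}
\hat{\alpha}_j(t) - \alpha(t) = \underbrace{[\hat{\alpha}_j(t) - \tilde{\alpha}_j(t)]}_{\xi_j(t)} + [\tilde{\alpha}_j(t) - \alpha_j(t)] + [\alpha_j(t) - \alpha(t)].
\end{equation}
The middle (discretization) term is $O(n^{-1})$ uniformly by Proposition~\ref{prop:conditional_bias}, and the last (approximation) term is $O(h_j^s)$ uniformly by Proposition~\ref{prop:approx_bias}. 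So the whole task reduces to bounding $\E\,\sup_t |\xi_j(t)|$.

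By Proposition~\ref{prop:martingale_decomp}, $\xi_j(t) = \int_0^T K_j(t,s) (J(s)/Y(s))\, dM(s)$ is a zero-mean martingale in the time-argument; its predictable variation, uniformly in $t$, is bounded by $v := C/(n h_j^{d_{\mathrm{eff}}})$ (Proposition~\ref{prop:variance} and Assumption~\ref{ass:atrisk}(Y1)), and each jump is of size at most $c := \|K_j\|_\infty/(ny_{\min}) \leq C/(nh_j^{d_{\mathrm{eff}}})$. Applying Bernstein's inequality for martingales (Lemma~\ref{lem:bernstein}) at a single $t$ with $x = \eta \sqrt{v\log n}$ yields
\begin{equation}
\Prob\bigl(|\xi_j(t)| \geq \eta\sqrt{v \log n}\bigr) \leq 2\exp\!\left(-\frac{\eta^2 \log n}{2(1 + \eta c \sqrt{\log n}/(3\sqrt{v}))}\right) \leq 2 n^{-\eta^2/3}
\end{equation}
for $n$ large enough, since $c/\sqrt{v} = O((nh_j^{d_{\mathrm{eff}}})^{-1/2})\to 0$ under the bandwidth regime.

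To pass from pointwise to uniform control, discretize $[0,T]$ on a grid $\mathcal{G}_n$ of cardinality $|\mathcal{G}_n| \lesssim n^{\kappa}$ for a fixed $\kappa$. The Lipschitz property (K4) and the bound $\|\partial_t K_j\|_\infty \leq L_K/h_j^2$ imply that $\xi_j$ is Lipschitz in $t$ with constant $O(1/(h_j^2 y_{\min}))$, so the oscillation of $\xi_j$ between grid points of spacing $n^{-\kappa}$ is $O(n^{-\kappa}/h_j^2)$, which is negligible compared with $\sqrt{v\log n}$ for $\kappa$ chosen large enough (polynomial in $s$ and $d_{\mathrm{eff}}$). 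A union bound with $\eta = \sqrt{3(\kappa+2)}$ then yields $\Prob(\sup_t |\xi_j(t)| > C'\sqrt{\log n /(nh_j^{d_{\mathrm{eff}}})}) \leq n^{-2}$, and integrating the tail (using the trivial bound $\sup_t |\xi_j(t)| \leq 2\|\alpha\|_\infty + O(1)$ on the exceptional event) gives
\begin{equation}
\E\,\sup_{t\in[0,T]}|\xi_j(t)| \leq C\sqrt{\frac{\log n}{nh_j^{d_{\mathrm{eff}}}}}.
\end{equation}

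Combining the three pieces, $\E\|\hat{\alpha}_j - \alpha\|_\infty \leq C\bigl(h_j^s + \sqrt{\log n/(nh_j^{d_{\mathrm{eff}}})}\bigr)$. Balancing the two dominant terms gives $h_j \asymp (\log n/n)^{1/(2s+d_{\mathrm{eff}})}$ and the advertised rate $(\log n/n)^{s/(2s+d_{\mathrm{eff}})}$. The main obstacle is the chaining/discretization step: care is needed so that the Lipschitz constant of $\xi_j$ (which blows up polynomially in $h_j^{-1}$) is controlled by the logarithmic envelope of the Bernstein bound. This is handled by choosing the grid spacing polynomially small in $n$, which costs only a constant factor in the effective $\log n$, and exploiting the bounded-jumps regime $c/\sqrt{v} \to 0$ so that Bernstein enters its sub-Gaussian tail.
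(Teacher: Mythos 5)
Your proposal is correct and follows essentially the same route as the paper's (much terser) proof: decompose into smoothing bias, discretization, and the martingale fluctuation term, control the supremum of the latter by a grid/union-bound argument with the martingale Bernstein inequality, and balance $h_j^s$ against $\sqrt{\log n/(nh_j^{d_{\mathrm{eff}}})}$. One small slip: on the exceptional event the trivial bound on $\sup_t|\xi_j(t)|$ is not $2\|\alpha\|_\infty+O(1)$ but rather $O(N(T)\|K\|_\infty/(nh_j^{d_{\mathrm{eff}}}y_{\min}))=O_P(h_j^{-d_{\mathrm{eff}}})$, which is only polynomial in $n$ and hence still negligible after multiplying by the $n^{-2}$ tail probability, so the conclusion stands.
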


\begin{proof}
Bias: $\|\alpha_j - \alpha\|_\infty \leq C_\alpha h_j^s$.

Stochastic: $\E[\sup_t |\hat{\alpha}_j(t) - \tilde{\alpha}_j(t)|] \leq C \sqrt{\log(1/h_j)/(n h_j^{d_{\mathrm{eff}}})}$.

Balancing: $h_j \asymp (\log n / n)^{1/(2s + d_{\mathrm{eff}})}$ gives the rate.
\end{proof}

\subsection{Oracle Inequality}

\begin{theorem}[Oracle Inequality]\label{thm:oracle}
There exist constants $C, \lambda_0 > 0$ such that for $\lambda \geq \lambda_0$,
\begin{equation}
\E\left[\|\hat{\alpha} - \alpha\|_2^2\right] \leq C \inf_{j \in \mathcal{J}_n} \left\{\|\alpha_j - \alpha\|_2^2 + \frac{L(j)}{n}\right\} + \frac{C}{n}.
\end{equation}
\end{theorem}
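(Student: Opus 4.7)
The plan is to adapt the Barron--Birgé--Massart (BBM) penalized model-selection machinery to the counting-process setting, following Reynaud-Bouret's template for Poisson intensity estimation. The pivot is the identity
\begin{equation*}
\gamma_n(j) + \|\alpha\|_2^2 \;=\; \|\hat{\alpha}_j - \alpha\|_2^2 \;-\; 2\,\nu_n(\hat{\alpha}_j) \;+\; R_n(j),
\qquad \nu_n(f) := \int_0^T f(t)\,\frac{J(t)}{Y(t)}\, dM(t),
\end{equation*}
with $R_n(j) = O(1/n)$ uniformly in $j$ absorbing the $\{Y=0\}$ contribution. I would first verify this identity by substituting $dN = \alpha Y\, dt + dM$ inside $\gamma_n(j)$ and rearranging (exactly in the spirit of Proposition~\ref{prop:contrast_unbiased}), keeping the $\{J<1\}$ residual controlled by Assumption~\ref{ass:atrisk}(Y1) so that it passes through the rest of the argument as an additive $C/n$ term.

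Second, the minimality of $\hat{j}$ gives $\gamma_n(\hat{j}) + \lambda L(\hat{j})/n \leq \gamma_n(j) + \lambda L(j)/n$ for every $j\in\mathcal{J}_n$, which after substitution and the linearity of $\nu_n$ yields
\begin{equation*}
\|\hat{\alpha}_{\hat{j}} - \alpha\|_2^2 \;\leq\; \|\hat{\alpha}_j - \alpha\|_2^2 \;+\; 2\bigl[\nu_n(\hat{\alpha}_{\hat{j}} - \alpha) - \nu_n(\hat{\alpha}_j - \alpha)\bigr] \;+\; \frac{\lambda}{n}\bigl(L(j) - L(\hat{j})\bigr) \;+\; \frac{C}{n}.
\end{equation*}
Let $\bar{S}_k := \mathrm{span}\{K_k(\cdot,s):s\in[0,T]\} + \mathrm{span}(\alpha)$ and $Z(k) := \sup\{|\nu_n(g)|/\|g\|_2 : g\in\bar{S}_k\setminus\{0\}\}$; since $\hat{\alpha}_k - \alpha \in \bar{S}_k$, Young's inequality gives for any $\theta \in (0, 1/2)$
\begin{equation*}
2|\nu_n(\hat{\alpha}_{\hat{j}} - \alpha)| \;\leq\; \theta\|\hat{\alpha}_{\hat{j}}-\alpha\|_2^2 + \theta^{-1} Z(\hat{j})^2,
\end{equation*}
and analogously for the $j$-term. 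Absorbing $\theta\|\hat{\alpha}_{\hat{j}}-\alpha\|_2^2$ on the left reduces the proof to controlling $\E[Z(k)^2]$ uniformly in $k$ by a multiple of $L(k)/n$.

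The crux, and the step I expect to be the main obstacle, is a uniform concentration inequality of the form
\begin{equation*}
\E\bigl[(Z(k)^2 - \kappa L(k)/n)_+\bigr] \;\leq\; \frac{C\,e^{-L(k)}}{n} \qquad \text{for every } k\in\mathcal{J}_n,
\end{equation*}
whose summed contributions are $O(1/n)$ by Kraft's inequality $\sum_k e^{-L(k)}\leq 1$. For each fixed $k$ I would apply the martingale Bernstein inequality (Lemma~\ref{lem:bernstein}) to $\nu_n(g)$ with $g$ ranging over an $\epsilon$-net of the $L^2$-unit ball of $\bar{S}_k$: by Assumption~\ref{ass:atrisk}(Y1) the predictable variation is bounded by $\|\alpha\|_\infty \|g\|_2^2/(n y_{\min})$ and the jumps by $\|g\|_\infty/(ny_{\min}) \lesssim (n h_k^{d_{\mathrm{eff}}/2})^{-1}\|g\|_2$. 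A standard chaining argument on the $(\dim\bar{S}_k\asymp h_k^{-d_{\mathrm{eff}}})$-dimensional space upgrades the pointwise bound to a uniform sub-Gaussian tail with variance proxy of order $(\dim\bar{S}_k + x)/n$; the BBM model-weight condition $L(k) \geq c\,\dim\bar{S}_k$ then dominates the net-complexity part, and the remaining tail integrates to $C e^{-L(k)}/n$. Taking $\lambda \geq 4\kappa/\theta$ absorbs $\theta^{-1} Z(\hat{j})^2$ into the penalty $\lambda L(\hat{j})/n$, while the expected $\theta^{-1}Z(j)^2$ collapses into the $L(j)/n$ term of the oracle inequality, producing the claim with a constant of the form $C=(1+\theta)/(1-\theta)$. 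The principal technical subtlety, standard in BBM-style proofs, is that $\hat{\alpha}_k$ is itself $M$-measurable so $\nu_n(\hat{\alpha}_k)$ is not a predictable stochastic integral; passing through the deterministic sup-space $\bar{S}_k$ above is the device that circumvents this.
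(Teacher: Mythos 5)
Your proposal follows essentially the same route as the paper's proof, which is itself only a sketch of the Barron--Birg\'e--Massart argument (contrast identity, basic inequality from minimality of $\hat{j}$, martingale Bernstein inequality for the empirical process $\nu_n$, Kraft summation, and absorption of the penalty); your version is in fact considerably more detailed than the paper's, and correctly identifies the key non-predictability issue that forces the passage through a supremum over a deterministic function class. The only point to watch is that $\mathrm{span}\{K_k(\cdot,s):s\in[0,T]\}$ is not literally finite-dimensional, so the claim $\dim\bar{S}_k\asymp h_k^{-d_{\mathrm{eff}}}$ should be replaced by a covering-number bound for the family of kernel translates at scale $h_k$ (which yields the same entropy and hence the same penalty calibration), a technicality the paper's own proof also leaves implicit.
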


\begin{proof}
The proof follows the Barron-Birg\'e-Massart strategy. For any levels $j, k$, the contrast difference admits a martingale decomposition. By Bernstein's inequality and the Kraft inequality, the penalty calibration ensures:
\begin{equation}
|\mathcal{M}_{jk}| \leq \epsilon \|\hat{\alpha}_j - \hat{\alpha}_k\|_2^2 + \frac{C(L(j) + L(k))}{n}
\end{equation}
with high probability. This yields the oracle bound.
\end{proof}

\subsection{Adaptation}

\begin{theorem}[Adaptation to Twin-Regularity]\label{thm:adaptation}
If $\alpha \in \mathcal{H}^s_{\mathrm{twin}}$ for some $s > 0$, then the penalized estimator $\hat{\alpha}$ satisfies
\begin{equation}
\E\left[\|\hat{\alpha} - \alpha\|_2^2\right] \leq C \left(\frac{\log n}{n}\right)^{\frac{2s}{2s + d_{\mathrm{eff}}}}.
\end{equation}
\end{theorem}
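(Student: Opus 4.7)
The plan is to deduce the adaptive rate directly from the oracle inequality of Theorem~\ref{thm:oracle} by evaluating the infimum at a bandwidth ladder rung tuned to the unknown smoothness $s$. The estimator $\hat{\alpha} = \hat{\alpha}_{\hat{j}}$ is smoothness-blind; only the analysis depends on $s$, which is what makes the bound an adaptation statement rather than an assumption on the tuning.

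First, I apply Theorem~\ref{thm:oracle} to obtain
\[
\E\!\left[\|\hat{\alpha} - \alpha\|_2^2\right] \leq C \inf_{j \in \mathcal{J}_n} \bigl\{\|\alpha_j - \alpha\|_2^2 + L(j)/n\bigr\} + C/n.
\]
Next I bound the bias via Proposition~\ref{prop:approx_bias}: under the twin-H\"older hypothesis, $\|\alpha_j - \alpha\|_2^2 \leq T \|\alpha_j - \alpha\|_\infty^2 \leq C_\alpha^2 T h_j^{2s}$. Assuming a geometric bandwidth ladder $h_j = 2^{-j}$ indexed over $j \in \{0, 1, \ldots, j_{\max}(n)\}$ with $j_{\max}(n) \asymp \log_2 n$, I take the penalty in the form $L(j) = c_0 \log n + c_1 j\, d_{\mathrm{eff}} \log 2$, which satisfies the Kraft inequality and yields $L(j)/n \asymp \log n /(n h_j^{d_{\mathrm{eff}}})$, dominating the stochastic fluctuation captured by Proposition~\ref{prop:variance}. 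I then pick $j^* \in \mathcal{J}_n$ so that $h_{j^*}$ is the largest dyadic value below $(\log n / n)^{1/(2s+d_{\mathrm{eff}})}$; for $n$ sufficiently large this $j^*$ is admissible. Balancing bias against penalty makes both $\|\alpha_{j^*} - \alpha\|_2^2$ and $L(j^*)/n$ of order $(\log n / n)^{2s/(2s+d_{\mathrm{eff}})}$, and the residual $C/n$ is of strictly smaller order.

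The principal obstacle is the calibration of $L(j)$: it must simultaneously satisfy the Kraft inequality $\sum_j e^{-L(j)} \leq 1$ and dominate $1/(n h_j^{d_{\mathrm{eff}}})$ uniformly in $j$, with the logarithmic inflation explaining the $(\log n)^{2s/(2s + d_{\mathrm{eff}})}$ loss relative to the oracle minimax rate of Theorem~\ref{thm:L2rate}. A secondary concern is that $j^*$ must lie in $\mathcal{J}_n$ simultaneously for every admissible $s > 0$; the geometric ladder with $j_{\max}(n) \asymp \log_2 n$ resolves this uniformly. Finally, since the construction of $\hat{\alpha}$ never invokes $s$, the same estimator attains the rate across the entire twin-H\"older scale $\{\mathcal{H}^s_{\mathrm{twin}} : s > 0\}$, which is precisely the adaptation claim.
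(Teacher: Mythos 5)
Your overall route is the same as the paper's: invoke the oracle inequality of Theorem~\ref{thm:oracle}, bound the bias term by the twin-H\"older hypothesis, and evaluate the infimum at a dyadic rung $j^*$ with $h_{j^*}$ just below $(\log n/n)^{1/(2s+d_{\mathrm{eff}})}$. The choice of $j^*$, the verification that it lies in $\mathcal{J}_n$ for large $n$, and the observation that the estimator itself never uses $s$ are all correct and in fact more carefully argued than the paper's two-line sketch (which takes $L(j)=j\log 2$ and asserts the balance without detail).

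There is, however, a genuine arithmetic gap in your penalty calibration, which you yourself flag as the principal obstacle but do not actually resolve. You set $L(j) = c_0\log n + c_1 j\, d_{\mathrm{eff}}\log 2$ and claim $L(j)/n \asymp \log n/(n h_j^{d_{\mathrm{eff}}})$. With $h_j = 2^{-j}$ this is false: your $L(j)$ is \emph{linear} in $j$, so $L(j)/n \leq C\log n/n$ uniformly over the ladder ($j\leq j_{\max}\asymp\log_2 n$), whereas $\log n/(n h_j^{d_{\mathrm{eff}}}) = 2^{j d_{\mathrm{eff}}}\log n/n$ grows \emph{exponentially} in $j$. Indeed $c_1 j\, d_{\mathrm{eff}}\log 2 = c_1 d_{\mathrm{eff}}\log(1/h_j)$, which is logarithmic in $1/h_j$, not polynomial. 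Consequently your penalty does not dominate the level-$j$ stochastic fluctuation $1/(n h_j^{d_{\mathrm{eff}}})$ for large $j$, and the subsequent balance ``bias $\asymp$ penalty at $h_{j^*}\asymp(\log n/n)^{1/(2s+d_{\mathrm{eff}})}$'' does not hold with your $L$: at that $j^*$ your $L(j^*)/n$ is only of order $\log n/n$, which is of strictly smaller order than $h_{j^*}^{2s}$ when $d_{\mathrm{eff}}>0$. The fix is to take $L(j)\asymp h_j^{-d_{\mathrm{eff}}}\log n$ (plus an additive term such as $j\log 2$ to cover $d_{\mathrm{eff}}=0$); the Kraft inequality $\sum_j e^{-L(j)}\leq 1$ tolerates this because it only requires summability of $e^{-L(j)}$ and is indifferent to how fast $L(j)$ grows. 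With that correction, $L(j^*)/n\asymp \log n/(n h_{j^*}^{d_{\mathrm{eff}}})$ genuinely balances $h_{j^*}^{2s}$ at $h_{j^*}\asymp(\log n/n)^{1/(2s+d_{\mathrm{eff}})}$ and the stated rate follows. (You should also be aware that Theorem~\ref{thm:oracle} as stated in the paper carries no per-level variance term inside the infimum, so the domination requirement you correctly identify is not actually visible in the inequality you are invoking; any complete proof must either use an oracle inequality whose penalty absorbs the $1/(n h_j^{d_{\mathrm{eff}}})$ variance or add that term to the infimum explicitly.)
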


\begin{proof}
By the oracle inequality, choose $j^*$ to balance $\|\alpha_{j^*} - \alpha\|_2^2 \asymp h_{j^*}^{2s}$ against $L(j^*)/n \asymp \log n / n$. With $L(j) = j \log 2$, the optimal $h_{j^*} \asymp n^{-1/(2s + d_{\mathrm{eff}})}$, yielding the rate.
\end{proof}

\subsection{Asymptotic Normality}

\begin{theorem}[Central Limit Theorem]\label{thm:CLT}
Under the assumptions of Theorem~\ref{thm:L2rate}, for fixed $t \in (0, T)$,
\begin{equation}
\sqrt{n h_j} \left(\hat{\alpha}_j(t) - \E[\hat{\alpha}_j(t)]\right) \xrightarrow{d} \mathcal{N}(0, \sigma^2(t)),
\end{equation}
where $\sigma^2(t) = \frac{\alpha(t)}{y(t)} \int K(u)^2 \, du$.
\end{theorem}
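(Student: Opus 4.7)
The plan is to reduce the statement to a direct application of Rebolledo's martingale CLT (Lemma~\ref{lem:rebolledo}) applied to the scalar stochastic integral produced by the martingale decomposition of Proposition~\ref{prop:martingale_decomp}. First I would write
\begin{equation*}
\hat{\alpha}_j(t) - \E[\hat{\alpha}_j(t)]
= \bigl(\hat{\alpha}_j(t) - \tilde{\alpha}_j(t)\bigr) + \bigl(\tilde{\alpha}_j(t) - \E[\tilde{\alpha}_j(t)]\bigr),
\end{equation*}
and observe that Assumption~\ref{ass:atrisk}(Y1) forces $J(s) \equiv 1$ for large $n$, so that $\tilde{\alpha}_j(t) = \int K_j(t,s)\alpha(s)\,ds = \alpha_j(t)$ is \emph{deterministic}, killing the second term. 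Hence, after scaling, it suffices to prove that
\begin{equation*}
\mathcal{Z}_n := \sqrt{n h_j}\int_0^T K_j(t,s)\,\frac{J(s)}{Y(s)}\,dM(s) \xrightarrow{d} \mathcal{N}\bigl(0,\sigma^2(t)\bigr).
\end{equation*}
Viewed as the terminal value of the martingale obtained by replacing $T$ by a running upper limit, $\mathcal{Z}_n$ is a local martingale with bounded jumps, so I can invoke Rebolledo.

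Second, I would verify the variance condition. By Proposition~\ref{prop:martingale_decomp}, the predictable variation is
\begin{equation*}
\langle \mathcal{Z}_n \rangle = n h_j \int_0^T K_j(t,s)^2 \frac{J(s)\alpha(s)}{Y(s)}\,ds.
\end{equation*}
Using Assumption~\ref{ass:atrisk}(Y2), $Y(s)/n$ converges uniformly to $y(s)$, and I can replace $n/Y(s)$ by $1/y(s) + o_P(1)$ inside the integral. The remaining deterministic integral is handled by the standard localization argument: setting $\tau = \varphi^{-j}(t)$, the change of variables $u = \varphi^{-j}(s)$ followed by the local rescaling $v = (u - \tau)/h_j$ concentrates the mass on the unit interval, and continuity of $\alpha/y$ at $t$ together with the kernel normalization yields
\begin{equation*}
\langle \mathcal{Z}_n \rangle \xrightarrow{\Prob} \frac{\alpha(t)}{y(t)} \int K(u)^2\,du = \sigma^2(t).
\end{equation*}

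Third, the Lindeberg-type jump condition is essentially free: each jump of $\mathcal{Z}_n$ has magnitude at most
\begin{equation*}
\sqrt{n h_j}\cdot \frac{\|K\|_\infty h_j^{-1}}{n y_{\min}}
= \frac{\|K\|_\infty}{y_{\min}\sqrt{n h_j}} \longrightarrow 0
\end{equation*}
since $n h_j \to \infty$, so eventually no jump exceeds any fixed $\epsilon>0$ and condition (ii) of Lemma~\ref{lem:rebolledo} holds trivially. Combining the two conditions with Rebolledo gives the asymptotic normality.

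The main obstacle I anticipate is the change of variables through $\varphi^{-j}$ in the variance computation: the substitution introduces a Jacobian factor $J_{\varphi^j}(\tau)$, which is bounded between $c_1^j$ and $c_2^j$ by Assumption~\ref{ass:group}(G2) but is not identically $1$ outside the measure-preserving case. Absorbing it cleanly into $\sigma^2(t)$ requires either strengthening (G2) to exact measure preservation at the base point $\tau = \varphi^{-j}(t)$ (the generic situation for rotations, translations on tori, and $1$-dimensional periodic shifts considered in Examples~\ref{ex:circadian}--\ref{ex:isotropy}), or reinterpreting $y(t)$ as the twin-transported at-risk intensity so that the Jacobian is absorbed into the effective denominator. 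Once this accounting is fixed, the rest is a routine verification along the lines of the proof of Proposition~\ref{prop:nelson_aalen}(iv).
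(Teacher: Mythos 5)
Your proposal follows essentially the same route as the paper: the martingale decomposition of Proposition~\ref{prop:martingale_decomp}, convergence of the predictable variation to $\sigma^2(t)$, the jump bound $\|K\|_\infty/(y_{\min}\sqrt{nh_j}) \to 0$ for the Lindeberg condition, and Rebolledo's CLT (Lemma~\ref{lem:rebolledo}). You supply several details the paper's three-line sketch omits --- in particular the verification that the centering $\E[\hat{\alpha}_j(t)]$ coincides with $\tilde{\alpha}_j(t)$ under (Y1), and the Jacobian accounting in the variance computation --- and these additions are correct and worthwhile.
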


\begin{proof}
Define $M_n(t) := \sqrt{n h_j} \int_0^T K_j(t, s) \frac{J(s)}{Y(s)} \, dM(s)$.

\textbf{Predictable variation:} $\langle M_n(t) \rangle \to \sigma^2(t)$.

\textbf{Lindeberg condition:} Jumps $|\Delta M_n(t, s)| \leq \|K\|_\infty/(y_{\min} \sqrt{n h_j}) \to 0$.

By Rebolledo's CLT (Lemma~\ref{lem:rebolledo}), $M_n(t) \xrightarrow{d} \mathcal{N}(0, \sigma^2(t))$.
\end{proof}

\begin{corollary}[Confidence Intervals]\label{cor:CI}
An asymptotic $(1-\gamma)$ confidence interval for $\alpha(t)$ is
\begin{equation}
\hat{\alpha}_j(t) \pm z_{\gamma/2} \sqrt{\frac{\hat{\alpha}_j(t)}{Y(t) h_j} \int K(u)^2 \, du}.
\end{equation}
\end{corollary}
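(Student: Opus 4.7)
The plan is to combine the CLT of Theorem~\ref{thm:CLT} with a consistent plug-in estimator of the asymptotic variance and Slutsky's theorem. First, Theorem~\ref{thm:CLT} gives
\[
\sqrt{nh_j}\bigl(\hat{\alpha}_j(t) - \E[\hat{\alpha}_j(t)]\bigr) \xrightarrow{d} \mathcal{N}(0, \sigma^2(t)),
\qquad \sigma^2(t) = \frac{\alpha(t)}{y(t)} \int K(u)^2\,du.
\]
To convert this into a confidence statement for $\alpha(t)$ itself, I would recenter at $\alpha(t)$ under an undersmoothing condition: by Propositions~\ref{prop:conditional_bias} and \ref{prop:approx_bias}, the bias decomposes as $\E[\hat{\alpha}_j(t)] - \alpha(t) = O(h_j^s) + O(n^{-1})$, so if $h_j$ satisfies $nh_j^{2s+1} \to 0$ (slightly faster than the MSE-optimal rate of Theorem~\ref{thm:L2rate}), the centered quantity $\sqrt{nh_j}(\hat{\alpha}_j(t) - \alpha(t))$ shares the same Gaussian limit.

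Next I would build a consistent plug-in variance estimator. Theorem~\ref{thm:consistency} gives $\hat{\alpha}_j(t) \xrightarrow{\Prob} \alpha(t)$, and Assumption~\ref{ass:atrisk}(Y2) gives $Y(t)/n \xrightarrow{\Prob} y(t) > 0$. The continuous mapping theorem then yields
\[
\hat{\sigma}^2(t) := \frac{n\,\hat{\alpha}_j(t)}{Y(t)} \int K(u)^2\,du \xrightarrow{\Prob} \sigma^2(t).
\]
Slutsky's theorem delivers the studentized limit
\[
\frac{\sqrt{nh_j}\,\bigl(\hat{\alpha}_j(t) - \alpha(t)\bigr)}{\hat{\sigma}(t)} \xrightarrow{d} \mathcal{N}(0, 1),
\]
which inverts to $\Prob\bigl(|\hat{\alpha}_j(t) - \alpha(t)| \leq z_{\gamma/2}\hat{\sigma}(t)/\sqrt{nh_j}\bigr) \to 1-\gamma$. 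Simplifying the pivot width, $\hat{\sigma}^2(t)/(nh_j) = \hat{\alpha}_j(t)\int K^2/(Y(t)h_j)$, gives exactly the stated interval.

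The main obstacle is not technical but conceptual: Theorem~\ref{thm:CLT} centers at $\E[\hat{\alpha}_j(t)]$, while the stated interval centers at $\hat{\alpha}_j(t)$, so honest coverage for $\alpha(t)$ requires either undersmoothing or an explicit bias correction (e.g.\ a higher-order kernel in the twin coordinate). I would add a brief remark acknowledging this standard caveat; at the MSE-optimal bandwidth the bias and standard error are of the same order, and the interval is then more naturally interpreted as a confidence band for the smoothed intensity $\alpha_j(t)$. All remaining ingredients—convergence of the predictable variation, the Lindeberg jump condition, and the plug-in consistency—follow directly from earlier propositions in the excerpt, so the verification reduces to assembling these pieces.
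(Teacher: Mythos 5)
Your derivation is correct and is exactly the route the corollary is meant to follow: the paper itself supplies no proof, but the intended argument is the CLT of Theorem~\ref{thm:CLT} combined with the plug-in variance estimator $\hat{\alpha}_j(t)\int K^2/(Y(t)h_j)$ and Slutsky, which is what you assemble. Your explicit caveat about the centering at $\E[\hat{\alpha}_j(t)]$ versus $\alpha(t)$ --- requiring undersmoothing ($nh_j^{2s+1}\to 0$) for honest coverage, since at the MSE-optimal bandwidth bias and standard error are of the same order --- is a genuine point that the corollary as stated glosses over, and including it makes your version more careful than the paper's.
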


\subsection{Minimax Lower Bound}

\begin{theorem}[Minimax Lower Bound]\label{thm:minimax}
Let $\mathcal{H}^s_{\mathrm{twin}}(R) = \{\alpha \in \mathcal{H}^s_{\mathrm{twin}}: C_\alpha \leq R, \|\alpha\|_\infty \leq R\}$. Then
\begin{equation}
\inf_{\tilde{\alpha}} \sup_{\alpha \in \mathcal{H}^s_{\mathrm{twin}}(R)} \E\left[\|\tilde{\alpha} - \alpha\|_2^2\right] \geq c \cdot n^{-\frac{2s}{2s + d_{\mathrm{eff}}}}.
\end{equation}
\end{theorem}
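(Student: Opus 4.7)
The plan is to obtain the lower bound by the standard reduction to multiple hypothesis testing in the Birg\'e--Assouad style, tailored to the counting process setting via Girsanov's theorem. I will construct a finite family of intensity functions that (i) all lie in the twin-H\"older ball $\mathcal{H}^s_{\mathrm{twin}}(R)$, (ii) are pairwise well separated in $L^2$, but (iii) induce counting process laws that are statistically indistinguishable at sample size $n$. The rate $n^{-2s/(2s+d_{\mathrm{eff}})}$ will then fall out of balancing the separation against the KL budget.

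First I would fix a smooth reference intensity $\alpha_0 \in \mathcal{H}^s_{\mathrm{twin}}(R/2)$ bounded away from $0$, a bandwidth $h = h_n$ to be chosen, and let $m \asymp h^{-d_{\mathrm{eff}}}$. Choosing a base bump $\psi \in C^\infty_c$ with $\int \psi = 0$ supported in the unit ball of a fundamental domain $\mathcal{D}$ for the $G$-action, I form disjoint scaled--translated copies $\psi_k$, $k=1,\dots,m$, in $\mathcal{D}$ at scale $h$, extend each $\psi_k$ to $E$ by $G$-equivariance, and set
\begin{equation}
\alpha_\omega(t) := \alpha_0(t) + c \, h^s \sum_{k=1}^m \omega_k \, \psi_k(t), \qquad \omega \in \{0,1\}^m,
\end{equation}
for a small constant $c$. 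The key regularity check is that $\alpha_\omega \in \mathcal{H}^s_{\mathrm{twin}}(R)$: because each $\psi_k$ is a classically $C^s$ bump that is $G$-invariant in the sense relevant to Definition~\ref{def:twin_holder}, its twin-smoothed version $(\psi_k)_j$ tracks $\psi_k$ at rate $h_j^s$, so the sum inherits the twin-H\"older norm with constant proportional to $c$ (independent of $m$ thanks to the disjoint supports). By construction, neighbors in Hamming distance satisfy $\|\alpha_\omega - \alpha_{\omega'}\|_2^2 \geq c_1 h^{2s+d_{\mathrm{eff}}} \cdot \rho(\omega,\omega')$, since a single bump contributes squared $L^2$ mass of order $h^{2s} \cdot h^{d_{\mathrm{eff}}}$.

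Next I would bound the information content. By Girsanov's theorem for counting processes and the multiplicative model with at-risk process $Y \asymp n y$, the Kullback--Leibler divergence between the laws $P_{\alpha_\omega}$ and $P_{\alpha_{\omega'}}$ of the observed counting process on $[0,T]$ satisfies
\begin{equation}
\mathrm{KL}(P_{\alpha_\omega} \Vert P_{\alpha_{\omega'}}) \leq C \, \E\!\left[ \int_0^T \frac{(\alpha_\omega(s)-\alpha_{\omega'}(s))^2}{\alpha_0(s)} \, Y(s) \, ds \right] \leq C' n \, \|\alpha_\omega - \alpha_{\omega'}\|_2^2,
\end{equation}
using that $\alpha_0$ is bounded below and Assumption~\ref{ass:atrisk}(Y1). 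For neighbors this yields $\mathrm{KL} \leq C'' n c^2 h^{2s+d_{\mathrm{eff}}}$, which I force to be at most a small constant by choosing $h \asymp n^{-1/(2s+d_{\mathrm{eff}})}$ and $c$ small.

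Finally, Assouad's lemma gives
\begin{equation}
\inf_{\tilde\alpha} \sup_{\omega} \E\|\tilde\alpha - \alpha_\omega\|_2^2 \geq \frac{m}{8} \cdot c_1 h^{2s+d_{\mathrm{eff}}} \cdot \bigl(1 - \sqrt{\mathrm{KL}/2}\bigr) \gtrsim h^{2s} \asymp n^{-\frac{2s}{2s+d_{\mathrm{eff}}}},
\end{equation}
which is the claimed bound since $\{\alpha_\omega\} \subset \mathcal{H}^s_{\mathrm{twin}}(R)$. The main obstacle I anticipate is the regularity verification in step one: one must confirm that the $G$-equivariant extensions of the bumps genuinely realize the twin-H\"older seminorm at the prescribed rate without inflating the constant $R$ in $m$, which requires exploiting the quasi-measure-preservation in Assumption~\ref{ass:group}(G2) together with the disjointness of the $\psi_k$ on the fundamental domain; the Girsanov KL bound and the combinatorial Assouad step are then standard.
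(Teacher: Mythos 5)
Your proposal is correct in outline but takes a genuinely different route from the paper. The paper's proof is a two-point Le Cam sketch: a single localized bump $\alpha_1 = \bar\alpha + a\,h^{-d_{\mathrm{eff}}/2}\psi((t-t_0)/h)$ against the constant $\alpha_0 = \bar\alpha$, with the constraints $a \lesssim n^{-1/2}$ (KL) and $a \lesssim h^{s+d_{\mathrm{eff}}/2}$ (regularity) balanced at $h \asymp n^{-1/(2s+d_{\mathrm{eff}})}$. You instead build a hypercube of $m \asymp h^{-d_{\mathrm{eff}}}$ disjoint bumps and apply Assouad with a Girsanov KL bound. Your route is not merely an alternative: for the $L^2$ loss it is the one that actually delivers the stated rate. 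With a single bump, both of the paper's constraints collapse to $a \lesssim n^{-1/2}$ at the balancing bandwidth, so the two-point separation is $\|\alpha_1-\alpha_0\|_2^2 = a^2 \asymp n^{-1}$, not $n^{-2s/(2s+d_{\mathrm{eff}})}$ — the paper's final line is only consistent when $d_{\mathrm{eff}}=0$. Your multi-bump construction repairs this: each bump contributes $h^{2s+d_{\mathrm{eff}}}$ of squared $L^2$ mass, and summing over $m$ coordinates in Assouad's lemma recovers $m\,h^{2s+d_{\mathrm{eff}}} \asymp h^{2s} \asymp n^{-2s/(2s+d_{\mathrm{eff}})}$, while the per-coordinate KL stays bounded. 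The one step you rightly flag as nontrivial — verifying that the $G$-equivariantly extended bumps lie in $\mathcal{H}^s_{\mathrm{twin}}(R)$ with a constant independent of $m$ — is also unaddressed in the paper (which simply asserts the membership constraint), so you should carry out that verification explicitly using the disjoint supports on the fundamental domain and Assumption~\ref{ass:group}(G2); with that done, your argument is complete and strictly stronger than the paper's sketch.
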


\begin{proof}
We use Le Cam's method with bump perturbations. Construct $\alpha_0(t) = \bar{\alpha}$ and $\alpha_1(t) = \bar{\alpha} + a h^{-d_{\mathrm{eff}}/2} \psi((t - t_0)/h)$ where $\psi$ is a smooth bump with $\int \psi = 0$.

The KL divergence $\mathrm{KL}(P_0 \| P_1) \asymp n a^2$. For indistinguishability: $a \lesssim n^{-1/2}$.

For membership in $\mathcal{H}^s_{\mathrm{twin}}(R)$: $a \lesssim h^{s + d_{\mathrm{eff}}/2}$.

Balancing: $h \asymp n^{-1/(2s + d_{\mathrm{eff}})}$ and $\|\alpha_1 - \alpha_0\|_2^2 = a^2 \asymp n^{-2s/(2s + d_{\mathrm{eff}})}$.
\end{proof}

\section{Local Polynomial TwinKernel Estimation}\label{sec:localpoly}

Local polynomial methods improve upon kernel smoothing by automatically correcting boundary bias.

\subsection{Definition}

\begin{definition}[Local Polynomial TwinKernel Estimator]\label{def:locpoly}
For polynomial degree $r \geq 0$, the local polynomial TwinKernel estimator minimizes
\begin{equation}
\sum_{i: X_i \leq T} K_j(t, X_i) \left(\frac{\Delta_i}{Y(X_i)} - \sum_{k=0}^r a_k (X_i - t)^k\right)^2.
\end{equation}
The estimator of $\alpha^{(m)}(t)$ is $\hat{\alpha}_j^{(m)}(t) = m! \hat{a}_m(t)$.
\end{definition}

\begin{proposition}[Explicit Formula]\label{prop:locpoly_explicit}
\begin{equation}
\hat{\alpha}_j^{(m)}(t) = \frac{1}{h_j^m} \int_0^T K_{m,r}^{(j)}\left(\frac{s-t}{h_j}\right) \frac{J(s)}{Y(s)} \, dN(s),
\end{equation}
where $K_{m,r}^{(j)}$ is the equivalent kernel of order $(m, r)$.
\end{proposition}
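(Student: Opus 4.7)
The plan is a direct weighted-least-squares derivation, with the counting-process structure entering only through the rewriting of the response vector as an integral against $dN$. First, I will set up the minimization of Definition~\ref{def:locpoly} in matrix form: let $\mathbf{X}(t)$ denote the $n \times (r+1)$ design matrix with $i$-th row $(1, X_i - t, \ldots, (X_i - t)^r)$, let $\mathbf{W}(t) = \mathrm{diag}(K_j(t, X_i))$ be the kernel weight matrix, and let $\mathbf{y}$ be the response vector with entries $\Delta_i / Y(X_i)$. Differentiating the objective in $\mathbf{a}$ yields the normal equations $S_n(t) \hat{\mathbf{a}}(t) = T_n(t)$ with $S_n(t) = \mathbf{X}(t)^\top \mathbf{W}(t) \mathbf{X}(t)$ and $T_n(t) = \mathbf{X}(t)^\top \mathbf{W}(t) \mathbf{y}$.

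Next, I rewrite $T_n(t)$ as a counting-process integral. Since $\Delta_i = 1$ precisely at the event times of $N$, where $dN$ has unit jumps,
\begin{equation*}
[T_n(t)]_k = \sum_i K_j(t, X_i) (X_i - t)^k \frac{\Delta_i}{Y(X_i)} = \int_0^T K_j(t, s) (s - t)^k \frac{J(s)}{Y(s)} \, dN(s).
\end{equation*}
Extracting the $(m+1)$-th coordinate of $\hat{\mathbf{a}}(t) = S_n(t)^{-1} T_n(t)$ and multiplying by $m!$ gives
\begin{equation*}
\hat{\alpha}_j^{(m)}(t) = \int_0^T m! \, K_j(t, s) \biggl\{ \sum_{l=0}^r [S_n(t)^{-1}]_{m, l} (s - t)^l \biggr\} \frac{J(s)}{Y(s)} \, dN(s),
\end{equation*}
which is already the desired linear functional of $dN/Y$ with an explicit, data-dependent weight.

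It remains to bring this weight into the canonical form $h_j^{-m} K_{m,r}^{(j)}((s-t)/h_j)$. Substituting $u = (s-t)/h_j$ and using $K_j(t, s) = h_j^{-1} K(d(\varphi^{-j} t, \varphi^{-j} s) / h_j)$, the factorization $S_n(t) = D_j \, \Sigma_n(t) \, D_j$ with $D_j = \mathrm{diag}(1, h_j, \ldots, h_j^r)$ gives $[S_n(t)^{-1}]_{m, l} = h_j^{-(m+l)} [\Sigma_n(t)^{-1}]_{m, l}$. Collecting the $h_j$-powers produces the prefactor $h_j^{-m}$ and leaves a function of $u$ alone, which I define to be $K_{m, r}^{(j)}(u)$. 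Its characterization as an equivalent kernel of order $(m, r)$---namely the reproducing property $\int u^k K_{m,r}^{(j)}(u) \, du = m! \, \delta_{k, m}$ for $k = 0, \ldots, r$---follows by applying the WLS procedure to the polynomial responses $y_i = (X_i - t)^k$, for which the local polynomial fit is exact and hence forces the stated moments.

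The main obstacle is the invertibility of $S_n(t)$ and the legitimacy of the rescaling step: $S_n(t)$ is random in finite samples and can degenerate if too few observations fall in the bandwidth window, so the identity holds exactly only with a data-dependent $K_{m,r}^{(j)}$. Assumption~\ref{ass:atrisk}(Y1) provides an at-risk mass of order $n h_j^{d_{\mathrm{eff}}}$ in each localization region, under which $\Sigma_n(t)$ converges uniformly to a deterministic, positive-definite matrix whose entries are $(k+l)$-th moments of $K$ weighted by the local at-risk density; in this regime $K_{m,r}^{(j)}$ becomes the standard deterministic equivalent kernel from local polynomial regression theory, as used in the asymptotic analysis of Section~\ref{sec:main}.
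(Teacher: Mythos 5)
The paper states this proposition without any proof, so there is nothing to compare against; your weighted-least-squares derivation is the standard Fan--Gijbels argument and is correct. The normal equations, the rewriting of $T_n(t)$ as an integral against $dN$ (valid because $J(X_i)=1$ at every observed event time), the diagonal rescaling $S_n(t)=D_j\Sigma_n(t)D_j$ producing the $h_j^{-m}$ prefactor, and the moment characterization $\int u^k K^{(j)}_{m,r}(u)\,du = m!\,\delta_{k,m}$ via exactness on polynomial responses are all the right ingredients, and you correctly flag the two genuine caveats: the identity holds exactly only with a random, $t$-dependent equivalent kernel (invertibility of $S_n(t)$ requiring enough at-risk mass in the window), with the deterministic kernel emerging only in the limit. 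One point worth making explicit, though it is really a gap in the proposition's own statement rather than in your proof: writing the remaining weight as a function of $u=(s-t)/h_j$ alone requires that $d(\varphi^{-j}t,\varphi^{-j}s)$ be a function of $s-t$, which holds for the translation and dilation actions of the paper's examples but not for a general $\varphi$ satisfying Assumption~\ref{ass:group}.
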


\subsection{Properties}

\begin{theorem}[Boundary Adaptation]\label{thm:boundary}
The local polynomial TwinKernel estimator automatically adapts at boundaries: for $t$ near the boundary, the bias remains $O(h_j^{r+1-m})$ rather than $O(1)$.
\end{theorem}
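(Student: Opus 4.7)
The plan is to reduce the claim to the polynomial-reproducing property of the equivalent kernel $K_{m,r}^{(j)}$ obtained in Proposition~\ref{prop:locpoly_explicit}, then combine it with a Taylor expansion of $\alpha$ around $t$. Conditioning on the at-risk process $Y$ and invoking the martingale decomposition (Proposition~\ref{prop:martingale_decomp}), the stochastic component has mean zero, so the boundary claim is a statement about the conditional mean
\[
\tilde{\alpha}_j^{(m)}(t) = \frac{1}{h_j^m} \int_0^T K_{m,r}^{(j)}\!\left(\frac{s-t}{h_j}\right) J(s)\,\alpha(s)\,ds,
\]
and the goal is to prove $|\tilde{\alpha}_j^{(m)}(t) - \alpha^{(m)}(t)| \leq C\,h_j^{r+1-m}$ uniformly in $t \in [0,T]$, including near the endpoints.

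First I would establish the algebraic reproducing identity: the equivalent kernel $K_{m,r}^{(j)}$ is constructed by solving the weighted normal equations $S_n(t)\,\hat a = b$ for the local polynomial fit, and this construction forces the linear map $\alpha \mapsto \tilde{\alpha}_j^{(m)}(t)$ to recover $q^{(m)}(t)$ exactly whenever $\alpha$ is replaced by any polynomial $q$ of degree at most $r$. Crucially, this property holds independently of whether the kernel support lies fully inside $[0,T]$ or is truncated by a boundary: it depends only on invertibility of the Gram matrix
\[
S_n(t) = \left(\int K_j(t,s)(s-t)^{k+\ell}\,\frac{J(s)}{Y(s)}\,ds\right)_{k,\ell=0}^{r},
\]
not on the symmetry of the kernel window. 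With the reproducing property in hand, I would Taylor-expand
\[
\alpha(s) = \sum_{k=0}^{r}\frac{\alpha^{(k)}(t)}{k!}(s-t)^k + R_r(t,s),\qquad |R_r(t,s)| \leq C\,|s-t|^{r+1},
\]
apply $\alpha \mapsto \tilde{\alpha}_j^{(m)}(t)$ term by term, and use reproduction to annihilate all polynomial contributions except the $k = m$ term, which reconstructs $\alpha^{(m)}(t)$. The remainder is integrated against an effective kernel of support diameter $h_j$, contributing the announced $O(h_j^{r+1-m})$.

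The distinction between interior and boundary behavior enters only through $S_n(t)$. In the interior, rescaling by $D_{h_j}=\mathrm{diag}(1,h_j,\dots,h_j^r)$ yields a matrix with entries given by universal moments of $K$; at the boundary the kernel window is one-sided, making odd moments nonzero, which is exactly the asymmetry that creates the $O(1)$ bias for the naive Ramlau-Hansen smoother. The local polynomial construction absorbs this asymmetry via $S_n(t)^{-1}$, so the reproducing identity still holds and the bias remains $O(h_j^{r+1-m})$. In the TwinKernel framework, ``boundary'' refers to the endpoints of the pushed-forward chart $\varphi^{-j}(E)$; the quasi-measure-preservation of Assumption~\ref{ass:group}(G2) allows bias bounds to be transported between scales with only multiplicative factors that are absorbed into $C$.

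The main obstacle is showing that the normalized Gram matrix $D_{h_j}^{-1} S_n(t) D_{h_j}^{-1}$ is uniformly well-conditioned as $t$ approaches the boundary and as $j$ varies in the admissible range. This reduces to a deterministic statement about truncated moment matrices of the base kernel $K$: provided the intersection of $\supp(K)$ with the rescaled admissible interval retains a nondegenerate subinterval, the truncated moment matrix is positive definite with eigenvalues bounded below by a constant depending only on $K$. Combined with the lower bound $Y(s)\geq n\,y_{\min}$ from Assumption~\ref{ass:atrisk}(Y1) to control the factor $J(s)/Y(s)$, this gives the needed uniform conditioning. Once $S_n(t)^{-1}$ is under control, $K_{m,r}^{(j)}$ inherits bounded $L^1$ and $L^\infty$ norms from $K$, and the Taylor-remainder estimate delivers the conclusion.
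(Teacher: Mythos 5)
The paper states Theorem~\ref{thm:boundary} without any proof (Section~\ref{sec:localpoly} gives only the definition and the unproved Proposition~\ref{prop:locpoly_explicit}), so there is nothing to compare your argument against; you are supplying the missing proof. Your route is the standard Fan--Gijbels one --- polynomial reproduction of the local least-squares projection, Taylor expansion, and uniform conditioning of the one-sided moment matrix --- and it is the right strategy: the reproducing identity is indeed purely algebraic and survives truncation of the kernel window, which is exactly why the boundary bias stays at order $h_j^{r+1-m}$.

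Two points need tightening before this is a complete proof. First, the Gram matrix you write down, $S_n(t)=\bigl(\int K_j(t,s)(s-t)^{k+\ell}\frac{J(s)}{Y(s)}\,ds\bigr)_{k,\ell}$, is not the one the estimator of Definition~\ref{def:locpoly} actually inverts: the normal equations of that weighted sum over event times give $S_n(t)_{k\ell}=\int K_j(t,s)(s-t)^{k+\ell}\,dN(s)$, a stochastic integral. The reproducing property holds for the \emph{random} matrix whenever it is invertible, but your uniform-conditioning step must then be a concentration argument showing $D_{h_j}^{-1}S_n(t)D_{h_j}^{-1}$ is close to its compensator $\int K_j(t,s)(s-t)^{k+\ell}\alpha(s)Y(s)\,ds$ uniformly in $t$ (using Assumptions~\ref{ass:atrisk}(Y1)--(Y2) and~\ref{ass:intensity}(A2) for the lower bound), not the purely deterministic truncated-moment argument you describe. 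Second, the Taylor remainder bound $|R_r(t,s)|\leq C|s-t|^{r+1}$ requires $\alpha\in C^{r+1}$ in the classical sense; the paper's regularity class $\mathcal{H}^s_{\mathrm{twin}}$ is defined only through $L^2$ approximation rates of the twin-smoothed versions and does not supply pointwise derivatives, so you should state explicitly that the theorem is being proved under a classical smoothness hypothesis (or explain how twin-regularity in the transported chart yields the needed pointwise expansion). Neither issue is fatal, but both are genuine steps rather than bookkeeping.
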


\begin{theorem}[Convergence Rates for Derivatives]\label{thm:deriv_rates}
If $\alpha \in \mathcal{H}^s_{\mathrm{twin}}$ with $s > m$, then
\begin{equation}
\E\left[\|\hat{\alpha}_j^{(m)} - \alpha^{(m)}\|_2^2\right] \leq C \cdot n^{-\frac{2(s-m)}{2s + d_{\mathrm{eff}}}}.
\end{equation}
\end{theorem}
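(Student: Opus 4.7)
The plan is to mirror the bias--variance analysis of Theorem~\ref{thm:L2rate}, but applied to the derivative estimator from Proposition~\ref{prop:locpoly_explicit} with exponents adjusted to account for the $h_j^{-m}$ factor in its explicit formula. I would start from the decomposition
\[
\E\|\hat{\alpha}_j^{(m)} - \alpha^{(m)}\|_2^2 \leq 2\|\E\hat{\alpha}_j^{(m)} - \alpha^{(m)}\|_2^2 + 2\,\E\|\hat{\alpha}_j^{(m)} - \E\hat{\alpha}_j^{(m)}\|_2^2,
\]
control the two pieces separately via the equivalent-kernel representation, and then optimize over $h_j$.

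For the variance, the martingale decomposition of Proposition~\ref{prop:martingale_decomp} carries over after replacing $K_j(t,\cdot)$ by $h_j^{-m} K_{m,r}^{(j)}((\cdot-t)/h_j)$. Under Assumption~\ref{ass:atrisk}(Y1), squaring and integrating the predictable variation yields
\[
\E\|\hat{\alpha}_j^{(m)} - \E\hat{\alpha}_j^{(m)}\|_2^2 \leq \frac{C\,\|K_{m,r}\|_2^2\,\|\alpha\|_\infty}{n\,h_j^{2m+d_{\mathrm{eff}}}},
\]
the extra $h_j^{-2m}$ arising from the differentiation prefactor. For the bias, the standard normal-equation argument for local polynomials of degree $r \geq \lfloor s\rfloor$ shows that the equivalent kernel $K_{m,r}^{(j)}$ annihilates polynomials of degree $\leq r$ in the transported coordinate $u = \varphi^{-j}\cdot s$. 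Taylor-expanding $\alpha$ at the transported center and invoking the twin-H\"older bound of Proposition~\ref{prop:approx_bias} in these coordinates gives $\|\E\hat{\alpha}_j^{(m)} - \alpha^{(m)}\|_\infty \leq C h_j^{s-m}$, hence the same $L^2$ bound. Balancing $h_j^{2(s-m)}$ against $1/(n h_j^{2m+d_{\mathrm{eff}}})$ forces $h_j \asymp n^{-1/(2s+d_{\mathrm{eff}})}$, which delivers the advertised rate $n^{-2(s-m)/(2s+d_{\mathrm{eff}})}$.

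The main obstacle is the bias step. The class $\mathcal{H}^s_{\mathrm{twin}}$ is defined via $L^2$ proximity of $\alpha$ to its smoothings $\alpha_j$, not via pointwise Taylor expansions of $\alpha^{(m)}$, so one needs an auxiliary characterization linking twin-H\"older smoothness of $\alpha$ to classical H\"older smoothness of $\alpha^{(m)}$ orbit-by-orbit, or an inverse/Jackson-type inequality in transported coordinates. A secondary technical issue is showing that the normal-equation matrix defining $K_{m,r}^{(j)}$ is uniformly invertible across boundary and interior positions when the kernel is transported along non-isometric orbits, so that $\|K_{m,r}\|_2$ and its moment-annihilation constants remain bounded uniformly in $j$ and $t$; this is needed to prevent the variance constant from degrading near orbit boundaries and to make the bias bound honest.
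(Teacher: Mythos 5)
The paper states Theorem~\ref{thm:deriv_rates} without any proof, so there is nothing to compare your argument against; you are supplying a proof where the authors supplied none. Your skeleton is the standard and correct one for local polynomial derivative estimation: the variance of $\hat{\alpha}_j^{(m)}$ acquires the factor $h_j^{-2m}$ from the prefactor in Proposition~\ref{prop:locpoly_explicit}, giving $O(1/(n h_j^{2m+d_{\mathrm{eff}}}))$ via the martingale predictable variation, and balancing this against a squared bias of order $h_j^{2(s-m)}$ yields $h_j \asymp n^{-1/(2s+d_{\mathrm{eff}})}$ and the advertised rate $n^{-2(s-m)/(2s+d_{\mathrm{eff}})}$. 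The exponent arithmetic is right, and the secondary issue you raise (uniform invertibility of the normal-equation matrix under non-isometric orbit transport, so that the equivalent kernel's norm and moment conditions hold uniformly in $j$ and $t$) is a legitimate technical point that any complete proof would have to address.

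The obstacle you identify in the bias step is not merely a technicality you are being cautious about --- it is a genuine gap, and in fact a gap in the theorem statement itself. The class $\mathcal{H}^s_{\mathrm{twin}}$ of Definition~\ref{def:twin_holder} is defined solely by the $L^2$ approximation rate $\|\alpha_j - \alpha\|_2 \leq C_\alpha h_j^s$; membership in this class does not imply that $\alpha$ is $m$ times differentiable, so the target $\alpha^{(m)}$ in the theorem is not even well defined for a general element of the class. (Proposition~\ref{prop:orbital_regularity} makes this vivid: any $G$-invariant $\alpha$, however rough along a single orbit transversal, lies in $\mathcal{H}^s_{\mathrm{twin}}$ for all $s$.) To make the bias bound $\|\E\hat{\alpha}_j^{(m)} - \alpha^{(m)}\|_\infty \leq C h_j^{s-m}$ honest you must either (a) add a hypothesis that $\alpha$ lies in a classical H\"older ball of order $s$ in the transported coordinates, from which the Taylor-plus-moment-annihilation argument goes through, or (b) prove a Bernstein-type inverse inequality showing that the $L^2$ approximation rate in Definition~\ref{def:twin_holder}, combined with the specific structure of the smoothing operators $K_j$, forces pointwise smoothness of order $s$. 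Option (b) is false without further conditions on $K$ (a single approximation-rate condition with one kernel family does not characterize a H\"older class), so the clean fix is (a): strengthen the hypothesis of the theorem. As written, neither the theorem nor your proposed proof can be completed from the paper's stated assumptions.
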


\section{Applications}\label{sec:applications}

\subsection{Hazard Rate Estimation under Random Censoring}

For $n$ subjects with survival times $T_i$ and censoring times $C_i$, we observe $(X_i, \Delta_i)$. If the hazard exhibits periodicity (e.g., circadian), use $G = \Z_T$ acting by translation modulo the period.

\begin{example}[Circadian Hazard]\label{ex:circadian_app}
For cardiac event data with period 24 hours, the TwinKernel estimator pools information across days, yielding more efficient estimation.
\end{example}

\subsection{Periodic and Quasi-Periodic Intensities}

\begin{theorem}[Parametric Rates for Periodic Intensities]\label{thm:periodic_rate}
If $\alpha$ is $\tau$-periodic and smooth on $[0, \tau]$, then
\begin{equation}
\E\left[\|\hat{\alpha} - \alpha\|_2^2\right] \leq C \cdot \frac{\log n}{n}.
\end{equation}
\end{theorem}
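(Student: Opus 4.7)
The plan is to leverage the orbital invariance of $\tau$-periodic intensities together with the adaptive oracle machinery from Section~\ref{sec:main}, without redoing any rate computation from scratch. A $\tau$-periodic $\alpha$ is exactly $G$-invariant under the cyclic shift group $G = \langle\varphi\rangle$ with $\varphi(t) = t + \tau \bmod T$ of Example~\ref{ex:periodic_group}, so Proposition~\ref{prop:orbital_regularity} yields $\alpha \in \mathcal{H}^s_{\mathrm{twin}}$ for \emph{every} $s > 0$, with twin-H\"older constant $C_\alpha(s)$ finite by the assumed smoothness on $[0,\tau]$. This is the key structural input: periodicity converts ordinary smoothness into arbitrary-order twin-regularity, sidestepping the usual $n^{-2s/(2s+d_{\mathrm{eff}})}$ rate barrier.

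Next, I invoke the oracle inequality (Theorem~\ref{thm:oracle}) for the penalized estimator $\hat{\alpha} = \hat{\alpha}_{\hat{j}}$, using Kraft weights $L(j) = j\log 2$ and the dyadic schedule $h_j = 2^{-j} h_0$. Inserting the bias bound $\|\alpha_j - \alpha\|_2^2 \leq C_\alpha(s)^2 h_0^{2s} 2^{-2sj}$ gives
\[
\E\bigl[\|\hat{\alpha} - \alpha\|_2^2\bigr] \;\leq\; C \inf_{j \in \mathcal{J}_n} \left\{ C_\alpha(s)^2 h_0^{2s} 2^{-2sj} + \frac{j \log 2}{n} \right\} + \frac{C}{n}.
\]
For any fixed $s > 0$, the two terms inside the infimum are geometrically decreasing and linearly increasing in $j$, respectively. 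Setting $j^* := \lceil (2s)^{-1} \log_2 n \rceil$ drives the squared-bias term down to $O(n^{-1})$ while the penalty is $O(\log n /(s n))$; both sides then collapse to the advertised $C_s \log n / n$ bound.

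The main obstacle is ensuring that $j^*$ is admissible within $\mathcal{J}_n$ and that the variance control underlying Theorem~\ref{thm:oracle} remains valid at this level. For any fixed $s > 1/2$ (and one such $s$ suffices for the conclusion), the bandwidth $h_{j^*} \asymp n^{-1/(2s)}$ still satisfies $nh_{j^*}/\log n \to \infty$, so the hypotheses of Theorem~\ref{thm:consistency} and the at-risk regularity of Assumption~\ref{ass:atrisk} continue to apply. A secondary subtlety is that $C_\alpha(s)$ in Proposition~\ref{prop:orbital_regularity} may grow with $s$; the argument sidesteps this by keeping $s$ fixed rather than letting $s \to \infty$, absorbing the $s$-dependent multiplier into the final constant $C$ in the statement of Theorem~\ref{thm:periodic_rate}.
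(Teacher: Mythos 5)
There is a genuine gap: your argument never uses the fact that for a $\tau$-periodic intensity the group acts transitively on periods, so the \emph{effective dimension collapses to $d_{\mathrm{eff}} = 0$}. That collapse is the entire content of the paper's proof, which simply substitutes $d_{\mathrm{eff}} = 0$ into the adaptive rate $(\log n/n)^{2s/(2s+d_{\mathrm{eff}})}$ of Theorem~\ref{thm:adaptation} to obtain $\log n/n$. You instead fix a single finite $s$, push $j^*$ out to $h_{j^*} \asymp n^{-1/(2s)}$ to kill the bias, and pay only the Kraft penalty $j^*\log 2/n \asymp \log n/n$. This works only because the right-hand side of Theorem~\ref{thm:oracle} as printed contains no stochastic (variance) cost for level $j$. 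But that cost is really there --- it is $\E\|\hat{\alpha}_j - \alpha_j\|_2^2 \asymp 1/(nh_j^{d_{\mathrm{eff}}})$ by Proposition~\ref{prop:variance} and the proof of Theorem~\ref{thm:L2rate} --- and at your $j^*$ it equals $n^{-(2s-1)/(2s)}$ when $d_{\mathrm{eff}} = 1$, which is polynomially larger than $\log n/n$ for every fixed $s$.

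A quick way to see that your argument cannot be right as written: it uses only ``$\alpha \in \mathcal{H}^s_{\mathrm{twin}}$ for one fixed $s > 1/2$,'' so it would prove $\E\|\hat{\alpha}-\alpha\|_2^2 \lesssim \log n/n$ for \emph{every} intensity in $\mathcal{H}^s_{\mathrm{twin}}(R)$, contradicting the minimax lower bound $c\, n^{-2s/(2s+d_{\mathrm{eff}})}$ of Theorem~\ref{thm:minimax} whenever $d_{\mathrm{eff}} \geq 1$. The fix is to restore the missing ingredient: periodicity plus transitivity of the $\Z_{T/\tau}$-action makes the variance of $\hat{\alpha}_j$ bandwidth-free (order $1/n$, i.e.\ $d_{\mathrm{eff}} = 0$), because the estimator at level $j$ effectively averages over all $T/\tau$ periods rather than over a shrinking window. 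Only then does driving the bias to $O(1/n)$ cost nothing beyond the $\log n/n$ penalty, and the parametric rate follows. Letting $s \to \infty$ in Theorem~\ref{thm:adaptation} with $d_{\mathrm{eff}} \geq 1$ is not a substitute, since the constant $C = C(s)$ is uncontrolled and each fixed $s$ gives only $(\log n/n)^{2s/(2s+d_{\mathrm{eff}})}$.
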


\begin{proof}
When $\alpha$ is $G$-invariant with transitive action, $d_{\mathrm{eff}} = 0$. By Proposition~\ref{prop:orbital_regularity}, $\alpha \in \mathcal{H}^s_{\mathrm{twin}}$ for all $s$. The rate becomes $(\log n / n)^{2s/(2s+0)} = \log n / n$.
\end{proof}

\section{Simulation Studies}\label{sec:simulations}

\subsection{Setup}

\textbf{Intensities:}
\begin{enumerate}
\item Periodic: $\alpha_1(t) = 1 + 0.5 \sin(2\pi t)$ on $[0, 5]$
\item Quasi-periodic: $\alpha_2(t) = 1 + 0.5 \sin(2\pi t) + 0.1t$
\item Non-periodic: $\alpha_3(t) = \exp(-t/2)(1 + t^2)$
\end{enumerate}

\textbf{Methods:} Classical kernel, Local linear, TwinKernel, TwinKernel-LP.

\textbf{Censoring:} Uniform on $[0, 6]$ ($\sim$20\% censoring).

\subsection{Results}

\begin{table}[h]
\centering
\caption{Mean ISE ($\times 10^3$) over 500 replications.}
\begin{tabular}{lcccc}
\toprule
& \multicolumn{4}{c}{Sample Size $n$} \\
\cmidrule(lr){2-5}
Method & 100 & 500 & 1000 & 2000 \\
\midrule
\multicolumn{5}{l}{\textit{Intensity $\alpha_1$ (periodic):}} \\
Classical kernel & 45.2 & 18.7 & 11.3 & 6.8 \\
Local linear & 38.4 & 15.2 & 9.1 & 5.4 \\
TwinKernel & 15.3 & 4.2 & 2.1 & 1.0 \\
TwinKernel-LP & \textbf{14.8} & \textbf{3.9} & \textbf{1.9} & \textbf{0.9} \\
\midrule
\multicolumn{5}{l}{\textit{Intensity $\alpha_2$ (quasi-periodic):}} \\
Classical kernel & 52.1 & 22.4 & 14.1 & 8.7 \\
Local linear & 44.3 & 18.1 & 10.8 & 6.5 \\
TwinKernel & 28.7 & 9.3 & 5.2 & 2.8 \\
TwinKernel-LP & \textbf{26.9} & \textbf{8.5} & \textbf{4.7} & \textbf{2.5} \\
\midrule
\multicolumn{5}{l}{\textit{Intensity $\alpha_3$ (non-periodic):}} \\
Classical kernel & 41.8 & 17.3 & 10.5 & 6.3 \\
Local linear & \textbf{35.6} & \textbf{14.2} & \textbf{8.4} & \textbf{4.9} \\
TwinKernel & 43.2 & 17.9 & 10.9 & 6.6 \\
TwinKernel-LP & 36.8 & 14.8 & 8.7 & 5.1 \\
\bottomrule
\end{tabular}
\end{table}

\subsection{Discussion}

\begin{itemize}
\item \textbf{Periodic $\alpha_1$}: TwinKernel achieves 3--7$\times$ lower ISE.
\item \textbf{Quasi-periodic $\alpha_2$}: 2--3$\times$ improvement.
\item \textbf{Non-periodic $\alpha_3$}: Comparable to classical methods.
\end{itemize}

\section{Discussion and Open Problems}\label{sec:discussion}

\subsection{Summary}

We developed TwinKernel methods for point process intensity estimation, achieving:
\begin{itemize}
\item Optimal rates depending on effective dimension $d_{\mathrm{eff}}$
\item Adaptation to unknown smoothness
\item 3--7$\times$ improvements for periodic intensities
\end{itemize}

\subsection{Open Problems}

\begin{enumerate}
\item \textbf{Marked point processes}: Extend to processes with marks (covariates at event times).

\item \textbf{Multivariate intensities}: Develop TwinKernel methods for multivariate counting processes with shared group structure.

\item \textbf{Online estimation}: Adapt for streaming data where events arrive sequentially.

\item \textbf{Testing for orbital structure}: Develop tests to determine whether an intensity exhibits orbital regularity.

\item \textbf{Non-cyclic groups}: Extend beyond cyclic groups to more general Lie groups.

\item \textbf{Model misspecification}: Study robustness when the assumed group structure is incorrect.
\end{enumerate}

\subsection{Software}

An R package implementing TwinKernel intensity estimation is in preparation.

\section*{Acknowledgments}

The author thanks the reviewers for helpful comments. This work was supported by ROOTS-INSIGHTS research initiative.

\bibliographystyle{plainnat}

\end{document}